\newcommand{\R}{{\mathbb R}}
\newcommand{\Z}{{\mathbb Z}}
\newcommand{\eps}{{\varepsilon}}
\newcommand{\xx}{{\textbf{x}}}
\newcommand{\yy}{{\textbf{y}}}
\newcommand{\zz}{{\textbf{z}}}
\newcommand{\hh}{{\textbf{h}}}
\newcommand{\vv}{{\textbf{v}}}
\newcommand{\bb}{{\textbf{b}}}
\newcommand{\cc}{{\textbf{c}}}
\newcommand{\M}{{\mathfrak{M}}}
\newcommand{\m}{{\mathfrak{m}}}
\newcommand{\dalpha}{{d\bm{\alpha}}}
\newcommand{\dbeta}{{d\bm{\beta}}}
\newcommand{\deta}{{d\bm{\eta}}}
\newcommand{\dv}{{d\textbf{v}}}
\newcommand{\dy}{{d\textbf{y}}}
\newtheorem{theorem}{Theorem}[section]
\newtheorem{lemma}[theorem]{Lemma}
\theoremstyle{definition}
\theoremstyle{remark}
\begin{document}

\title{On the representation of quadratic forms by quadratic forms}

\author{Rainer Dietmann}
\address{Department of Mathematics, Royal Holloway, University of London,
TW20 OEX Egham, UK}
\curraddr{}
\email{Rainer.Dietmann@rhul.ac.uk}

\author{Michael Harvey}
\address{Department of Mathematics, Royal Holloway, University of London,
TW20 OEX Egham, UK}
\email{Michael.Harvey@rhul.ac.uk}

\subjclass[2000]{11D09, 11D72, 11E12, 11P55}

\date{}

\begin{abstract}
Using the circle method, we show that for a fixed positive definite
integral
quadratic form $A$, the expected asymptotic formula for the number of
representations of a positive definite integral quadratic
form $B$ by $A$ holds true, providing that the dimension
of $A$ is large enough in terms of the dimension of $B$ and the
maximum ratio of the successive minima of $B$, and providing that
$B$ is sufficiently large in terms of $A$.
\end{abstract}

\maketitle

\bibliographystyle{amsplain}
\bibliography{siegel3}

\section{Introduction}

The study of representing an integral quadratic form by another integral
quadratic form has a long history in number theory. In this paper we
use matrix notation for quadratic forms, so
let $A = (A_{ij})$ and $B=(B_{ij})$ be symmetric positive definite integer
matrices, of dimensions $n$ and $m$, respectively.
We are interested in finding $n \times m$ integer matrices $X$ such that
\begin{equation}\label{xax}X^{T}AX = B,\end{equation}
this way generalising the classical problem of representing a positive
integer as a sum of squares.
While the Local-Global principle is known to hold true for
\textit{rational} solutions $X$ of the Diophantine problem
\eqref{xax}, existence of solutions over $\mathbb{R}$ (here automatic
by positive definiteness) and all local
rings $\mathbb{Z}_p$ is not enough to ensure existence of an
\textit{integer} solution $X$. 
It is therefore natural to look for additional conditions
making sure that the Local-Global principle also holds over $\mathbb{Z}$.
The usual point of view then is to fix $m$,
$n$ and $A$ and try to represent
`large enough' $B$ for dimension $m$ as large as possible in terms of $n$.
In this context,
Hsia, Kitaoka and Kneser \cite{HsiaKitaokaKneser} have shown the
Local-Global principle to hold true, whenever $n \geq 2m+3$ and
$\min B \geq c_{1}$ for some constant $c_{1}$ depending only on
$A$ and $n$, where as usual $\min B$ denotes the first
successive minimum of $B$, i.e.
\[
  \min B=\min_{\mathbf{x} \in \mathbb{Z}^m \backslash \{\mathbf{0}\}}
  \mathbf{x}^T B \mathbf{x}.
\]
Recent work of Ellenberg and Venkatesh \cite{EllenbergVenkatesh} used ergodic theory to show that the condition on
$n$ can be greatly improved to $n \geq m+5$, 
under the additional assumption that the discriminant of $B$ is square-free.  This latter condition has been refined
by Schulze-Pillot \cite{Schulze-Pillot}.

The approaches above do not give any quantitative information about integer solutions to \eqref{xax}.  Let
$N(A,B)$ denote the number of integer matrices $X$ satisfying \eqref{xax}.
Note that this quantity is finite as $A$ is positive definite.
Siegel \cite{Siegel} gave an exact formula for a weighted version of $N(A,B)$.  Let
$\mathfrak{A}$ be a set of representatives of all equivalence classes of forms in 
the genus of $A$.  For such a representative $A \in \mathfrak{A}$, let $o(A)$ denote
the number of automorphs of $A$, and let $W(\mathfrak{A}) = \sum_{A \in \mathfrak{A}}1/o(A)$.  
Then Siegel showed that
$$\frac{\sum_{A \in \mathfrak{A}}N(A,B)/o(A)}{W(\mathfrak{A})} = 
\left\{\begin{array}{ll}\alpha_{\infty}(A,B)\prod_{p}\alpha_{p}(A,B)
& \mbox{if } m < n-1\\
\frac{1}{2}\alpha_{\infty}(A,B)\prod_{p}\alpha_{p}(A,B)
& \mbox{if } m = n-1,\end{array}\right.$$
where these factors depend only on the genera of $A$ and $B$, the term
\begin{equation}\label{alphainfty}
\alpha_{\infty}(A,B) = (\det{A})^{-m/2}(\det{B})^{(n-m-1)/2}\pi^{m(2n-m+1)/4}\prod_{n-m < j \leq n}(\Gamma(j/2))^{-1}
\end{equation}
corresponds to the density of real solutions to \eqref{xax}, and for any prime $p$, we have
\begin{equation}\label{alphap}
\alpha_{p}(A,B) = (p^{-t})^{mn-m(m+1)/2}\#\{X \bmod{p^{t}}: X^{T}AX \equiv B \pmod{p^{t}}\},
\end{equation}
for all sufficiently large integers $t$.  In particular, if the genus of $A$ contains only one equivalence class,
then this gives an exact formula for $N(A,B)$, but if the genus of $A$
contains more than one class, we only get some upper bound on $N(A,B)$.
Our focus in this paper is on obtaining an \textit{asymptotic} formula
for $N(A,B)$ rather than an exact one, but valid for all forms $A$.
By asymptotic, in this context we mean asymptotic in terms of the
successive minima of $B$.
Without changing $N(A,B)$, by replacing $B$ by an equivalent form
if necessary, we may assume that $B$ is Minkowski-reduced.
In particular,
$$ 0 < \min B = B_{11} \leq B_{22} \leq \cdots \leq B_{mm},
\qquad |B_{ij}| \leq B_{ii} \;\;\; (1 \leq i < j \leq m).$$
For $1 \leq i \leq m$, define $\gamma_{i}$ to be the positive
real number which satisfies
\begin{equation}
\label{gamma_i}
  B_{11} = B_{ii}^{\gamma_{i}},
\end{equation}
and define
\begin{equation}
\label{gamma}
\gamma := \sum_{i=1}^{m}\frac{1}{\gamma_{i}}.
\end{equation}
Note that $\gamma_i \le 1 \;\;\; (1 \le i \le m)$.

\begin{theorem}\label{thm}
Suppose that $n > (2\gamma + m(m-1))\Big(\frac{m(m+1)}{2}+1\Big)$.  Then there exists $\delta > 0$ such that
\begin{equation}\label{Nabthm}
N(A,B) = \alpha_{\infty}(A,B)\prod_{p}\alpha_{p}(A,B) +
O((\det{B})^{\frac{n-m-1}{2} - \delta}),
\end{equation}
where $\alpha_{\infty}(A,B), \alpha_{p}(A,B)$ are defined above, and
the implied $O$-constant does not depend on $B$.
\end{theorem}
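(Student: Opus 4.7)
The plan is to apply the Hardy--Littlewood circle method to the system of $m(m+1)/2$ quadratic equations encoded in $X^T A X = B$. Since $A$ is positive definite, the $j$-th column $\xx_j$ of any integer solution satisfies $|\xx_j|^2 \ll_A B_{jj}$, so I may restrict $X$ to an anisotropic box $\mathcal{B} \subset \Z^{mn}$ whose $j$-th column has entries of size $O(B_{jj}^{1/2})$. With a dual variable $\boldsymbol{\alpha} = (\alpha_{ij})_{1 \le i \le j \le m}$ on $[0,1)^{m(m+1)/2}$ and the exponential sum
$$
S(\boldsymbol{\alpha}) = \sum_{X \in \mathcal{B}} e\!\left( \sum_{i \le j} \alpha_{ij}\bigl((X^T A X)_{ij} - B_{ij}\bigr) \right),
$$
orthogonality gives $N(A,B) = \int_{[0,1)^{m(m+1)/2}} S(\boldsymbol{\alpha})\,\dalpha$. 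A Dirichlet dissection then decomposes the torus into major arcs $\M$ (neighbourhoods of $\boldsymbol{a}/q$ with $q$ bounded by a small power of $B_{11}$) and minor arcs $\m$, with widths chosen anisotropically to reflect the sizes $B_{ii}$.

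On $\M$, writing $\boldsymbol{\alpha} = \boldsymbol{a}/q + \boldsymbol{\beta}$, I would approximate $S(\boldsymbol{\alpha})$ by a product of a complete Gauss sum modulo $q$ and a continuous oscillatory integral in $\boldsymbol{\beta}$. Completing the sum over $q$ and the integral over $\boldsymbol{\beta}$ assembles the Gauss sums into $\prod_p \alpha_p(A,B)$ and the integral into $\alpha_\infty(A,B)$, matching \eqref{alphainfty} and \eqref{alphap}. Convergence of the singular series and admissibility of the completion errors follow from standard bounds on local densities, provided $n$ is large enough in terms of $m$.

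The main obstacle is the minor arc estimate, which is what forces the hypothesis on $n$. I would apply Weyl differencing to $S(\boldsymbol{\alpha})$: each differencing step linearises the quadratic in one direction, reducing the question to counting lattice points in sheared parallelepipeds weighted by the column sizes $B_{ii}^{1/2}$ of $\mathcal{B}$. A Dirichlet argument in the $m(m+1)/2$ coordinates of $\boldsymbol{\alpha}$ then shows that $|S(\boldsymbol{\alpha})|$ can be large only when $\boldsymbol{\alpha}$ is anisotropically well approximable by rationals; this is precisely the criterion defining $\M$, yielding a contradiction on $\m$. The quantitative saving needed to beat the main term depends on both the anisotropy of $\mathcal{B}$ (contributing the factor $2\gamma + m(m-1)$, since the individual column savings are governed by $1/\gamma_i$, summed over $i$) and the number of dual variables in the pruning argument (contributing $m(m+1)/2+1$). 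Their product is exactly the threshold on $n$ in the theorem. When $n$ exceeds it, the minor arc integral beats the main term by a power of $\det B$, providing the required $\delta > 0$ in \eqref{Nabthm}, uniformly in $B$.
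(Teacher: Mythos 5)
Your proposal follows the same strategy as the paper: anisotropic boxes with side lengths $P_i \asymp B_{ii}^{1/2}$, a Weyl-type inequality showing $S(\bm{\alpha})$ is small off the major arcs, a Davenport--Birch pruning argument over a sequence of enlarged arcs to control the minor arcs, and identification of the resulting singular series and singular integral with $\prod_p \alpha_p(A,B)$ and $\alpha_\infty(A,B)$. The one technical nuance your sketch elides is that the paper treats the diagonal coefficients $\alpha_{ii}$ and off-diagonal coefficients $\alpha_{ij}$ ($i<j$) by genuinely different arguments---classical Weyl differencing in $\xx_i$ for the former, a Cauchy--Schwarz step followed by Vaughan's lemma in the spirit of Parsell for the latter---and then merges the approximating denominators by taking a least common multiple; this split is precisely where the coefficient $2\gamma + m(m-1)$ decomposes into a diagonal part $2\gamma$ and an off-diagonal part $m(m-1)$.
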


For $n \ge 2m+3$
it was shown by Kitaoka (see \cite{Kitaoka}, Proposition 9) that
\begin{equation}
\label{ungef}
  1 \ll \prod_{p}\alpha_{p}(A,B) \ll 1,
\end{equation}
whenever \eqref{xax} is soluble over each $\mathbb{Z}_{p}$,
with implicit constants independent of $B$.
Recalling \eqref{alphainfty}, we find that
the main term in \eqref{Nabthm} is of greater order of magnitude
than the error term, and gives a true asymptotic formula provided
that $\det{B}$ is large enough in terms of $m$, $n$ and $A$.
Since $\gamma$ is bounded for fixed $m$ and $n$, the latter condition is
equivalent to
$B_{11} \geq c_{2}$ for some constant $c_{2}$ depending only on $m$, $n$,
and $A$.

Let us now
briefly connect our result to others to be found in the literature:
When $m=1$, then as mentioned at the beginning, the equation
\eqref{xax} reduces to the classical problem
of representing a positive integer by a positive definite 
quadratic form.  
One attains such an asymptotic formula for $N(A,B)$ as long as $n \geq 3$ (see \cite{DukeSchulze-Pillot}, \cite{Heath-Brown}).  
For general $m >1$, Raghavan \cite{Raghavan} uses the theory of Siegel modular forms to establish an asymptotic formula
whenever $n \geq 2m+3$, under the assumption
\begin{equation}\label{minB}
B_{11} = \min B \geq c_{3}(\det{B})^{1/m},\end{equation}
for some fixed constant $c_{3}$.  We note that there exists some constant $c_{4}$
depending only on $m$ such that $B_{11} \leq c_{4}(\det{B})^{1/m}$.  Therefore Raghavan's result requires the
successive minima of $B$ to be of similar orders, which
essentially translates into
the condition $\gamma=1$ in our setting.
Our result does not require this condition, but may require
a much larger number $n$ of variables
when $B_{11}$ is much smaller than $\det{B}$.  
For $m=2$ and $n \ge 7$, Kitaoka \cite{Kitaoka} shows that the condition
$B_{11} \geq c_{5}$ for some constant $c_{5}$ only depending
on $n$ and $A$
suffices, avoiding any further assumptions and
this way paralleling what is known
for $m=1$. No such result is known so far for $m>2$;
see also Schulze-Pillot \cite{Schulze-Pillot2}
for more background information on this topic.

Whereas most previous approaches to this problem were using modular
forms, 
our strategy is to treat \eqref{xax} as a system of $R:= \frac{m(m+1)}{2}$ quadratic
equations, and apply the circle method; see also \cite{D} and
\cite{JB} for circle method approaches to related higher degree
problems.
The main difficulty is adapting the method to work in a 
box with uneven side lengths, and it is exactly here
where the dependence on $\gamma$ comes in.
We obtain a version of Weyl's inequality in \S 2 on
following the method of Birch \cite{Birch}
as well as an argument of Parsell (see \cite{Parsell}, Lemma 4.1).
We then use this to estimate the minor arcs in \S 3, before handling the major arcs in \S 4.  Once this has been accomplished, we need to show that we
have the main term in the Theorem by examining the singular series and
singular integral in \S\S 5 and 6.\\ \\
\textbf{Notation:}
As usual, $\eps$ will denote a small positive number that may change
value from one statement to the next.
All implied constants may depend on $A$, $m$, $n$, $\eps$.
We apply the usual notation that
$e(z) = e^{2\pi i z}$, $e_{q}(z)=e^{\frac{2\pi i z}{q}}$.
We use $\|x\|$ to denote the distance of the nearest integer to a real number $x$.  We also set
$|\xx| = \max_{1 \leq i \leq n}|x_{i}|$
for the maximum norm for any vector $\xx \in \mathbb{R}^{n}$, and we
write $(a,b)$ for the greatest common divisor of two integers $(a,b)$.
Summations over vectors $\mathbf{x}$ are usually to be understood
as summation over $\mathbf{x} \in \mathbb{Z}^n$, and
multidimensional integrations are usually to be understood to be
performed in $R$-dimensional space. We sometimes use conditions of
the form $q \ll L$ for a certain quantity $L$, in particular in
summations and integrals.
These are to be understood in the following way: There
exists a suitable constant $C$, depending at most on $A$, $m$, $n$,
$\eps$, such that the condition $q \ll L$ can be replaced by
$q \le CL$.
\\ \\
\textbf{Acknowledgements:} 
This work has been supported by grant EP/I018824/1 `Forms in many
variables', funding a one year PostDoc position
at Royal Holloway for the second author.

\section{Weyl-type inequalities}
By letting $X= (\xx_{1} \cdots \xx_{m})$, with column vectors
$\xx_{i} =(x_{i1},\ldots,x_{in}) \in \mathbb{Z}^{n}$ for each
$i \in \{1, \ldots, m\}$, we may write
\eqref{xax} as the following system of
\[
  R:= \frac{m(m+1)}{2}
\]
equations:
$$\xx_{i}^{T}A\xx_{j} = B_{ij} \quad (1 \leq i \leq j \leq m).$$
Clearly, since $A$ is positive definite, these equations imply that
\[
  |\mathbf{x}_i| \ll B_{ii}^{1/2} \quad (1 \le i \le m)
\]
for an implicit constant depending only on $A$, and since $B$ is
positive definite, there exists a real
solution of these equations within that range. Therefore, for
sufficiently large $C$ depending only on $A$, define
\begin{equation}
\label{holiday}
  P_{i} := C^{1/\gamma_i} B_{ii}^{1/2}
\end{equation}
for each $i \in \{1, \ldots, m\}$, and note that by \eqref{gamma_i},
we have
\begin{equation}\label{Piprop}
0 < P_{1} \leq \cdots \leq P_{m}, \qquad P_{1} = P_{i}^{\gamma_{i}} \quad
 (1 \leq i \leq m).
\end{equation}
For convenience, we shall also define
$$\Pi := \prod_{i=1}^{m}P_{i}.$$
Note that
\begin{equation}
\label{gamma_useful}
  \Pi = P_i^{\gamma \gamma_i} \quad (1 \le i \le m)
\end{equation}
by \eqref{gamma} and \eqref{Piprop}.

For real $\bm{\alpha} = (\alpha_{ij})_{1 \leq i \leq j \leq m}$ and $\bb = (B_{ij})_{1 \leq i \leq j \leq m}$,
we define the exponential sum
$$S(\bm{\alpha}, \bb) := \sum_{|\xx_{1}| \leq P_{1}}\cdots\sum_{|\xx_{m}| \leq P_{m}}
e\Big(\sum_{1 \leq i \leq j \leq m}\alpha_{ij}(\xx_{i}^{T}A\xx_{j} - B_{ij})\Big),$$
and let $S(\bm{\alpha}) := S(\bm{\alpha}, \textbf{0})$.
By our choice of $C$ and the $P_i$  we then have
\begin{equation}\label{NAB}
N(A,B) = \int_{[0,1)^{R}}S(\bm{\alpha}, \bb)\dalpha.
\end{equation}

Our aim is to show that, as long as $n$ is large enough, then $S(\bm{\alpha})$ is `small', unless each $\alpha_{ij}$ is well-approximated by a rational number with small denominator. The next lemma achieves this for the diagonal
coefficients $\alpha_{ii}$.

\begin{lemma}\label{lemmaii}
Let $0 < \theta < 1$.  Suppose that $S(\bm{\alpha}) \gg \Pi^{n-k}$ for some positive real number $k$.  
For each $i \in \{1, \ldots, m\}$, if 
\begin{equation}\label{ncond}
n > \frac{2k\gamma_{i}\gamma}{\theta},
\end{equation}
then there exists an integer $q_{ii} \geq 1$ satisfying
$$q_{ii} \ll P_{i}^{\theta}, \qquad \|q_{ii}\alpha_{ii}\| \leq P_{i}^{-2 + \theta}.$$
\end{lemma}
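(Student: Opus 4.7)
The plan is to apply Weyl differencing only in the block variable $\xx_i$, exploiting that this is the unique block carrying the diagonal quadratic phase $\alpha_{ii}\xx_i^T A\xx_i$. Isolating the $\xx_i$-sum, I would write
\[
  S(\bm\alpha) \;=\; \sum_{\substack{|\xx_j|\le P_j\\ j\ne i}} e\bigl(\Phi\bigr)\, T,\qquad T \;=\; \sum_{|\xx_i|\le P_i} e\bigl(\alpha_{ii}\xx_i^T A\xx_i + L(\xx_i)\bigr),
\]
where $\Phi$ is a phase independent of $\xx_i$ and $L$ is a linear form in $\xx_i$ whose coefficients depend on the other $\xx_j$ through the off-diagonal $\alpha_{ij}$. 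Cauchy--Schwarz over the $(m-1)n$ free variables then gives
\[
  |S(\bm\alpha)|^2 \;\ll\; (\Pi/P_i)^{n} \sum_{\substack{|\xx_j|\le P_j\\ j\ne i}} |T|^2.
\]

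Next I would Weyl-difference inside $|T|^2$. Setting $\hh = \xx_i - \yy_i$, the difference of quadratic phases becomes $2\alpha_{ii}\hh^T A\yy_i + \alpha_{ii}\hh^T A\hh$ and the linear part collapses to $L(\hh)$. Taking absolute values kills the linear phase completely, so all dependence on $\xx_j$ ($j \ne i$) vanishes; a componentwise bound on the inner geometric $\yy_i$-sum yields
\[
  |T|^2 \;\ll\; \sum_{|\hh|\le 2P_i} \prod_{\ell=1}^{n} \min\bigl(P_i,\ \|2\alpha_{ii}(A\hh)_\ell\|^{-1}\bigr),
\]
uniformly in $\xx_j$ ($j\ne i$). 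Substituting back, and invoking the hypothesis $|S(\bm\alpha)|\gg\Pi^{n-k}$ together with the identity $\Pi = P_i^{\gamma\gamma_i}$ from \eqref{gamma_useful}, I obtain the Weyl-sum lower bound
\[
  \sum_{|\hh|\le 2P_i} \prod_{\ell=1}^{n} \min\bigl(P_i,\ \|2\alpha_{ii}(A\hh)_\ell\|^{-1}\bigr) \;\gg\; P_i^{2n - 2k\gamma\gamma_i}.
\]

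The final step is the standard Davenport--Birch shrinking argument: dyadically dissect by the sizes of $\|2\alpha_{ii}(A\hh)_\ell\|$, pigeonhole to find many $\hh\in\Z^n$ on which all $n$ of these norms are simultaneously small, and invert $A$ over $\mathbb{Q}$ to convert an integer-vector approximation of $2\alpha_{ii}\cdot A\hh$ into a scalar approximation of $\alpha_{ii}$ by a rational with small denominator. Provided $n\theta > 2k\gamma\gamma_i$, i.e.\ \eqref{ncond} holds, this produces an integer $q_{ii}\ll P_i^{\theta}$ with $\|q_{ii}\alpha_{ii}\|\ll P_i^{-2+\theta}$, as desired.

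The only delicate point is the bookkeeping for the unequal box lengths: the Cauchy--Schwarz step produces a factor $(\Pi/P_i)^{2n}$ that would be ruinous under crude estimation, but it is precisely compensated by the identity $P_i^{\gamma\gamma_i} = \Pi$, which converts the loss into the clean exponent $2k\gamma\gamma_i$ on the right-hand side of the Weyl sum bound. This exchange is exactly why the parameter $\gamma$ must appear in the hypothesis on $n$; once the accounting is in place, the extraction of $q_{ii}$ is routine.
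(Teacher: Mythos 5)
Your proof follows the same route as the paper: isolate the $\xx_i$ block, Weyl-difference (which kills the off-diagonal and linear phases upon taking absolute values), apply a Davenport-style shrinking argument to extract a small denominator $q_{ii}$, and use the identity $\Pi = P_i^{\gamma\gamma_i}$ to convert the box-length mismatch into the criterion $n\theta > 2k\gamma\gamma_i$. The only cosmetic deviation is that you bound $|S|^2$ via Cauchy--Schwarz over the free blocks, whereas the paper bounds $|S|$ by the triangle inequality and then uses a uniform bound on $|T_i|^2$; both manipulations give the same exponents.
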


\begin{proof}
Fix $i \in \{1, \ldots, m\}$.  Then we have
\begin{equation}
\label{salphaleq}
  |S(\bm{\alpha})| \leq
  \sum_{|\xx_{1}| \leq P_{1}}\cdots\sum_{|\xx_{i-1}| \leq P_{i-1}}\sum_{|\xx_{i+1}| \leq P_{i+1}}\cdots\sum_{|\xx_{m}| \leq P_{m}}|T_{i}(\bm{\alpha})|,
\end{equation}
where 
\begin{equation}\label{Tidef}
T_{i}(\bm{\alpha}) = T_{i}(\bm{\alpha};\xx_{1},\ldots,\xx_{i-1},\xx_{i+1},\ldots,\xx_{m}) := 
\sum_{|\xx_{i}|\leq P_{i}}e\Big(\sum_{j=1}^{m}\alpha_{ij}\xx_{i}^{T}A\xx_{j}\Big),\end{equation}
and for ease of notation we let $\alpha_{ij} = \alpha_{ji}$ if $i > j$. 
By squaring and differencing, it is clear that on writing
$\mathbf{z}=\tilde{\mathbf{x}}_i-\mathbf{x}_i$, we obtain
\begin{align}
\label{weyl_step}
  |T_i(\bm{\alpha})|^2 = & \sum_{|\mathbf{x}_i| \le P_i}
  \sum_{|\tilde{\mathbf{x}}_i| \le P_i} e \left( \sum_{\substack{j=1 \\ j \ne i}}^m
  \alpha_{ij} (\tilde{\mathbf{x}}_i^T-\mathbf{x}_i^T) A \mathbf{x}_j
  + \alpha_{ii} (\tilde{\mathbf{x}}_i^T A \tilde{\mathbf{x}}_i -
  \mathbf{x}_i^T A \mathbf{x}_i) \right) \nonumber \\
  = & \sum_{|\mathbf{z}| \le 2P_i}
  \sum_{\substack{|\mathbf{x}_i| \le P_i:\\ |\mathbf{z}+\mathbf{x}_i| \le P_i}}
  e \left( \sum_{\substack{j=1\\j \ne i}}^m \alpha_{ij}
  \mathbf{z}^T A \mathbf{x}_j + \alpha_{ii} (\mathbf{z}^T A \mathbf{z}
  + 2 \mathbf{x}_i^T A \mathbf{z}) \right).
\end{align}
In particular,
\begin{align*}
|T_{i}(\bm{\alpha})|^{2} \leq&
\sum_{|\zz| \leq 2P_{i}}\Big|\sum_{\substack{|\xx| \leq P_{i}:\\|\zz+\xx| \leq P_{i}}}
e(2\alpha_{ii}\xx^{T}A\zz)\Big|\\
\ll & 
\sum_{|\zz| \leq 2P_{i}}\prod_{u=1}^{n}\min\{P_{i},\|2\alpha_{ii}(A_{u1}z_{1} + \cdots + A_{un}z_{n}) \|^{-1}\},
\end{align*}
uniformly in $\xx_{1},\ldots,\xx_{i-1},\xx_{i+1},\ldots,\xx_{m}$.

Let
\begin{align*}
  N(\alpha_{ii}, P_{i}) := \# & \{\zz \in \Z^n: |\zz|\leq P_{i}
  \mbox{ and } \\
  & \|2\alpha_{ii}(A_{u1}z_{1} + \cdots + A_{un}z_{n})\| \leq P_{i}^{-1} \quad
(1 \leq u \leq n)\}.
\end{align*}
Then by standard techniques (see the proof of Lemma 13.2 in Davenport \cite{Davenportbook} for instance), for any $\eps >0$ we have
$$|T_{i}(\bm{\alpha})|^{2} \ll N(\alpha_{ii}, P_{i})P_{i}^{n+\eps}.$$

For any real number $\theta$ with $0 < \theta < 1$ define
\begin{align*}
  M(\alpha_{ii}, P_{i}^{\theta}) := \# & \{\zz \in \Z^n:
 |\zz|\leq P_{i}^{\theta} \mbox{ and } \\
  & \|2\alpha_{ii}(A_{u1}z_{1} + \cdots + A_{un}z_{n})\| \leq P_{i}^{-2+\theta} \quad (1 \leq u \leq n)\}.
\end{align*}
Then we have
$$M(\alpha_{ii}, P_{i}^{\theta}) \gg P_{i}^{n\theta - n}N(\alpha_{ii}, P_{i}),$$
by a standard argument using Davenport \cite[Lemma 12.6]{Davenportbook},
as in the proof of \cite[Lemma 13.3]{Davenportbook}. Therefore
$$|T_{i}(\bm{\alpha})|^{2} \ll P_{i}^{2n - n\theta + \eps}M(\alpha_{ii}, P_{i}^{\theta}),$$
and hence, by \eqref{salphaleq}, we conclude that
$$S(\bm{\alpha}) \ll \Pi^{n}P_{i}^{- n\theta/2 + \eps}M(\alpha_{ii}, P_{i}^{\theta})^{1/2}.$$

Suppose that $S(\bm{\alpha}) \gg \Pi^{n-k}$ for some positive real number
$k$.  Then we have
$$\Pi^{n-k} \ll S(\bm{\alpha}) \ll 
\Pi^{n}P_{i}^{- n\theta/2 + \eps}M(\alpha_{ii}, P_{i}^{\theta})^{1/2},$$
and thus
\begin{align*}
M(\alpha_{ii}, P_{i}^{\theta}) &\gg \Pi^{-2k}P_{i}^{n\theta - \eps}\\
& = P_{i}^{-2k\gamma_{i}\gamma + n\theta - \eps},\end{align*}
on using (\ref{gamma_useful}).

Our assumption \eqref{ncond} implies that this exponent is strictly positive for small enough $\eps$, and
therefore we have $M(\alpha_{ii}, P_{i}^{\theta}) \geq 2$.
Hence there exists some $\zz \in \Z^n$ such that
$\zz \neq \textbf{0}$ and
$$|\zz| \leq P_{i}^{\theta}, \qquad \|2\alpha_{ii}(A_{u1}z_{1} + \cdots + A_{un}z_{n})\| \leq P_{i}^{-2+\theta} \quad
(1 \leq u \leq n).$$
Now, since $\zz$ is non-zero and our matrix $A$ is non-singular, we have
$$A_{u1}z_{1} + \cdots + A_{un}z_{n} \neq 0$$
for some $u \in \{1, \ldots, n\}$.
For this $u$, define $q_{i} = 2|A_{u1}z_{1} + \cdots + A_{un}z_{n}| \neq 0$.
Then we have
$1 \le q_{i} \ll P_{i}^{\theta}$, and $\|q_{i}\alpha_{ii}\| \leq P_{i}^{-2+\theta}.$
\end{proof}

We deal with the remaining $\alpha_{ij}$ where $i \neq j$
in the following lemma, whose proof is along the lines of 
\cite[Lemma 4.1]{Parsell}.
\begin{lemma}\label{lemmaij}
Let $\delta$ be a real number satisfying $0 < \delta \leq \frac{1}{\gamma}$.  
Suppose that $S(\bm{\alpha}) \gg \Pi^{n-k}$ for some positive real number $k$.
For fixed $i,j$ satisfying $1 \leq i < j \leq m$, suppose that
\begin{equation}\label{ncond2}
n > 2k\gamma_i \gamma.
\end{equation}
Then there exists an integer $q_{ij} \geq 1$ such that
$$q_{ij} \ll \Pi^{\frac{2k}{n}+\delta}, 
\quad \|q_{ij}\alpha_{ij}\| \leq \Pi^{\frac{2k}{n}+\delta}(P_{i}P_{j})^{-1}.$$
\end{lemma}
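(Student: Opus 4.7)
The plan is to mimic Lemma \ref{lemmaii} but after the initial Cauchy--Schwarz step to exploit the summation over $\mathbf{x}_j$ in order to extract information about $\alpha_{ij}$, essentially following Parsell~\cite[Lemma 4.1]{Parsell}. Cauchy--Schwarz applied to the summation over $|\mathbf{x}_\ell|\le P_\ell$ for $\ell\ne i$ in \eqref{salphaleq} gives
$$|S(\bm{\alpha})|^2 \ll (\Pi/P_i)^n \sum_{\substack{|\mathbf{x}_\ell|\le P_\ell\\\ell\ne i}} |T_i(\bm{\alpha})|^2.$$
Substituting \eqref{weyl_step} and interchanging the order of summation, each sum over $\mathbf{x}_\ell$ ($\ell\ne i$) factorises into a product of one-dimensional geometric-type sums, bounded by $\prod_{u=1}^n \min(P_\ell,\|\alpha_{i\ell}(A\mathbf{z})_u\|^{-1})$. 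Taking the trivial bound $P_\ell^n$ for every $\ell\ne i,j$, retaining the non-trivial $\min$ only for $\ell=j$, and using $O(P_i^n)$ for the $\mathbf{x}_i$-summation, one arrives at
$$|S(\bm{\alpha})|^2 \ll \frac{\Pi^{2n}}{(P_iP_j)^n}\,U, \qquad U := \sum_{|\mathbf{z}|\le 2P_i}\prod_{u=1}^n \min\bigl(P_j,\,\|\alpha_{ij}(A\mathbf{z})_u\|^{-1}\bigr).$$

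Combined with the hypothesis $S(\bm{\alpha})\gg \Pi^{n-k}$, this yields $U\gg \Pi^{-2k}(P_iP_j)^n$. A dyadic decomposition of the minima in the spirit of Davenport \cite[Lemma 13.2]{Davenportbook}, adapted to the mixed scales $|\mathbf{z}|\le 2P_i$ and $\min$-level $P_j$, converts this into
$$N_0 := \#\{\mathbf{z}\in\Z^n : |\mathbf{z}|\le 2P_i,\ \|\alpha_{ij}(A\mathbf{z})_u\|\le P_j^{-1}\} \gg \Pi^{-2k}P_i^n P_j^{-\eps}.$$
Applying the scaling of Davenport \cite[Lemma 12.6]{Davenportbook} with parameter $\lambda = c\Pi^{2k/n+\delta}/P_i$ for a small constant $c>0$ then shrinks the count to $\gg \Pi^{n\delta - \eps}$, which is $\ge 2$ for $\eps<n\delta$ and $B$ sufficiently large. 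Thus a nonzero $\mathbf{z}$ exists with $|\mathbf{z}|\ll \Pi^{2k/n+\delta}$ and $\|\alpha_{ij}(A\mathbf{z})_u\|\le \Pi^{2k/n+\delta}(P_iP_j)^{-1}$, and setting $q_{ij}=|(A\mathbf{z})_u|$ for any $u$ with $(A\mathbf{z})_u\ne 0$ (as in Lemma \ref{lemmaii}) completes the proof.

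The main technical obstacle is to ensure $\lambda\le 1$: by \eqref{gamma_useful} this reduces to $(2k/n+\delta)\gamma\gamma_i<1$, which for sufficiently small $\delta$ follows from hypothesis \eqref{ncond2}. When instead $\lambda\ge 1$ (i.e.\ $\Pi^{2k/n+\delta}\ge P_i$), no scaling is needed---$N_0\ge 2$ already produces a nonzero $\mathbf{z}$ with the desired bounds---so both regimes together cover the full range $0<\delta\le 1/\gamma$. A secondary technicality is carrying out the Davenport~13.2 estimate with mismatched scales $P_i$ and $P_j$, whose $P_j^\eps$ overhead must be absorbed into the $\delta$ slack.
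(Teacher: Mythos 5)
Your proposal follows a genuinely different route from the paper.  You transport the entire Davenport~\cite[Lemmas~12.6, 13.2, 13.3]{Davenportbook} counting-and-shrinking machinery from the diagonal case (Lemma~\ref{lemmaii}) to the off-diagonal one, extracting $q_{ij}$ as $|(A\mathbf{z})_u|$ for a short lattice vector $\mathbf{z}$.  The paper instead avoids that machinery entirely for the off-diagonal terms: after the same Cauchy--Schwarz and differencing steps it changes variables $z_u=(A\mathbf{h})_u$ so that the inner sum literally factors as a power $\bigl(\sum_{|z|\le\lambda}\min(P_j,\|\alpha_{ij}z\|^{-1})\bigr)^n$ of a one-dimensional sum, applies Vaughan~\cite[Lemma~2.2]{Vaughanbook}, obtains an approximation $a/q_{ij}$ from Dirichlet's theorem, and then derives $q_{ij}\ll\Pi^{2k/n+\delta}$ by contradiction from the hypothesis $S(\bm{\alpha})\gg\Pi^{n-k}$.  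The Vaughan--Dirichlet route is cleaner precisely because it never needs to shrink a region of the form $\{|\mathbf{z}|\le 2P_i,\ \|\gamma_u(\mathbf{z})\|\le P_j^{-1}\}$ with $P_i\ne P_j$.

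Your route, by contrast, does need to do so, and this is where you understate the difficulty.  Davenport's Lemma~12.6, as stated, is the \emph{balanced} shrinking lemma: it concerns sets $\{|\mathbf{z}|<aZ,\ \|\gamma_u(\mathbf{z})\|<a^{-1}Z\}$ in which the box scale $a$ is the reciprocal of the modulus scale $a^{-1}$, and its proof exploits that the associated $2n$-dimensional lattice/convex body is self-dual (up to the map $(u,w)\mapsto(-w,u)$, using the symmetry of $A$), giving $\lambda_i\lambda_{2n+1-i}\asymp 1$ for the successive minima.  For your $N_0$ the box is $2P_i$ and the modulus level is $P_j^{-1}$, so the body has volume $\asymp(P_i/P_j)^n\le 1$ and the duality instead gives $\lambda_i\lambda_{2n+1-i}\asymp P_j/P_i\ge 1$; one can still deduce $\lambda_{n+1}\ge 1$ and hence $N(Z)\gg Z^nN(1)$, so the conclusion you want is \emph{true}, but it is a genuine extension of Lemma~12.6 rather than an application of it.  You flag the $\lambda\le 1$ boundary and the $P_j^\varepsilon$ overhead in the 13.2 step as the technicalities, but the mixed-scale Lemma~12.6 is the real one, and you quote it as if it applied verbatim.  (The mixed-scale 13.2-type estimate, on the other hand, \emph{is} routine: the dyadic pigeonhole there only pigeonholes on $\gamma_u(\mathbf{z})\bmod 1$ and does not shrink the $\mathbf{z}$-box, so it needs neither symmetry nor matched scales.)  With that extension supplied your argument closes; the $\lambda\ge1$ regime and the uniformity over $\delta\in(0,1/\gamma]$ are handled correctly, and the final extraction of $q_{ij}$ matches Lemma~\ref{lemmaii}.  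So: correct strategy, but a nontrivial missing lemma masquerading as a citation; the paper's Vaughan-plus-Dirichlet route is structurally simpler because it never meets the unbalanced box.
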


\begin{proof}
Fix $i, j \in \{1, \ldots, m\}$ such that
$1 \leq i < j \leq m$. By an application of the Cauchy--Schwarz inequality, we have
\begin{equation}\label{Sboundl2}|S(\bm{\alpha})|^{2} \le (\Pi P_{i}^{-1})^{n}
\sum_{\substack{|\xx_{t}| \leq P_{t}\\(1 \leq t \leq m, t \neq i,j)}}\sum_{|\xx_{j}| \leq P_{j}}
|T_{i}(\bm{\alpha})|^{2},\end{equation}
where $T_{i}(\bm{\alpha})$ has been defined in \eqref{Tidef}.
Now \eqref{weyl_step} gives
\begin{align}\label{Tsquare}\nonumber
\sum_{|\xx_{j}| \leq P_{j}}|T_{i}(\bm{\alpha})|^{2}&\leq
\sum_{|\yy| \leq P_{i}}\sum_{\substack{|\hh| \le 2P_i:\\|\yy + \hh| \leq P_{i}}}
\Big|\sum_{|\xx_{j}| \leq P_{j}}e(\alpha_{ij}\hh^{T}A\xx_{j})\Big|\\\nonumber
& \ll \sum_{|\yy| \leq P_{i}}\sum_{\substack{|\hh| \le 2P_i:\\|\yy + \hh| \leq P_{i}}}\prod_{u=1}^{n}
\min\{P_{j}, \|\alpha_{ij}(A_{u1}h_{1} + \cdots + A_{un}h_{n})\|^{-1}\}\\
& \ll (P_{i})^{n}\sum_{|\hh| \leq 2P_{i}}\prod_{u=1}^{n}\min\{P_{j}, \|\alpha_{ij}(A_{u1}h_{1} + 
\cdots + A_{un}h_{n})\|^{-1}\}.
\end{align}

Let
\[
  z_{u} = A_{u1}h_{1} + \cdots + A_{un}h_{n}
\]
for each
$u \in \{1, \ldots, n\}$, and set 
\[
  \lambda := 2nP_{i} \max_{1 \le i,j \le n} |A_{ij}|.
\]
Note that
\[
  P_i \ll \lambda \ll P_i.
\]
Then, since $A$ is a non-singular matrix, we have
\begin{align*}
& \sum_{|\hh| \leq 2P_{i}}\prod_{u=1}^{n}\min\{P_{j}, \|
\alpha_{ij}(A_{u1}h_{1} + 
\cdots + A_{un}h_{n})\|^{-1}\} \\ \ll &
\sum_{\substack{|\zz| \leq \lambda}}
\prod_{u=1}^{n}\min\{P_{j}, \|\alpha_{ij}z_{u}\|^{-1}\}\\
= & \Big(\sum_{|z| \leq \lambda}
\min\{P_{j},\|\alpha_{ij}z\|^{-1}\}\Big)^{n}.
\end{align*}

Combining \eqref{Sboundl2} and \eqref{Tsquare}, we have
\begin{align*}
|S(\bm{\alpha})|^{2} &\ll \Pi^{2n}(P_{i}P_{j})^{-n}
\Big(  P_j +
\sum_{1 \le z \leq \lambda}\min\{P_{j},\|\alpha_{ij}z\|^{-1}\}\Big)^{n}\\
& \ll \Pi^{2n} \left( P_i^{-n} + \left(
\frac{1}{q} + \frac{1}{P_{j}} + \frac{q}{P_{i}P_{j}}\right)^{n}
(\log{2P_{i}q})^{n} \right), \end{align*}
on applying Vaughan \cite[Lemma 2.2]{Vaughanbook} 
provided that $|\alpha_{ij} - \frac{a}{q}| \leq q^{-2}$ for coprime integers $a,q$ with $q \geq 1$.

Let $\delta$ be a positive real number with $0 < \delta \leq \frac{1}{\gamma}$.  Then we use
\eqref{gamma}, \eqref{Piprop} and \eqref{gamma_useful} to see that
$$P_{i}P_{j}\Pi^{-\frac{2k}{n}-\delta} = P_{j}^{1 + \frac{\gamma_{j}}{\gamma_{i}} -
\frac{2k\gamma_{j}\gamma}{n}-\delta\gamma_{j}\gamma} > 1,$$
using $\gamma_i \ge \gamma_j$ and \eqref{ncond2}.
By Dirichlet's theorem, there exist coprime integers $q_{ij}, a$ satisfying
$$1 \leq q_{ij} \leq P_{i}P_{j}\Pi^{-\frac{2k}{n}-\delta}, \quad 
|q_{ij}\alpha_{ij}-a| \leq (P_{i}P_{j})^{-1}\Pi^{\frac{2k}{n}+\delta}.$$  
Therefore 
$$S(\bm{\alpha})\ll \Pi^{n+\eps}
\left( P_i^{-n/2} + \left(\frac{1}{q_{ij}} + \frac{1}{P_{j}} +
\frac{q_{ij}}{P_{i}P_{j}}\right)^{n/2} \right).$$

Now
$$\Pi^{n+\eps}P_{j}^{-n/2} \ll \Pi^{n-k},$$
provided that $P_{j} \gg \Pi^{\frac{2k}{n}+\eps}$.
For sufficiently small $\varepsilon$,
this follows from
\eqref{ncond2} because of
$$\Pi^{\frac{2k}{n}+\eps} = P_{j}^{\frac{2k\gamma_{j}\gamma}{n} + \eps}$$
by \eqref{gamma_useful}.
Analogously, we get
\[
  \Pi^{n+\eps} P_i^{-n/2} \ll \Pi^{n-k}.
\]
Since $q_{ij} \leq P_{i}P_{j}\Pi^{-\frac{2k}{n}-\delta}$, it follows that
$$\Pi^{n+\eps}\Big(\frac{q_{ij}}{P_{i}P_{j}}\Big)^{n/2} \ll \Pi^{n-k},$$
provided $\eps >0$ is small enough compared to $\delta$.
Therefore we obtain
$$S(\bm{\alpha})\ll \frac{\Pi^{n+\eps}}{q_{ij}^{n/2}} + \Pi^{n-k}.$$
If $q_{ij} \gg \Pi^{\frac{2k}{n}+\delta}$, then we have $S(\bm{\alpha})\ll \Pi^{n-k}$
for sufficiently small $\eps >0$,
which contradicts the hypothesis of the lemma.  Hence we have $q_{ij} \ll \Pi^{\frac{2k}{n}+\delta}$.
\end{proof}

Combining the previous two lemmas gives the following Weyl-type inequality for our exponential sum $S(\bm{\alpha})$.
\begin{lemma}\label{Weyl}
Let $0 < \theta < 1$ and $k > 0$.  Assume that 
\begin{equation}\label{ncond3}n > \frac{2k\gamma}{\theta}.\end{equation} 
Then either (i) we have
\[
  S(\bm{\alpha}) \ll \Pi^{n-k},
\]
or (ii), there exist integers $q, a_{ij} \quad
(1 \leq i \leq j \leq m)$ that are coprime,\\
i.e. $(q, \mathbf{a})=(q, a_{11}, a_{12}, \ldots, a_{mm})=1$, such that
\begin{gather}
  1 \le q \ll \Pi^{\theta(1+\frac{m(m-1)}{2\gamma})}, \nonumber\\
\label{approx}
  |q\alpha_{ij} - a_{ij}| \ll \Pi^{\theta(1+\frac{m(m-1)}{2\gamma})} (P_{i}P_{j})^{-1} 
  \quad (1 \leq i \leq j \leq m).
\end{gather}
\end{lemma}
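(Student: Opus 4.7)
\medskip

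The plan is to assemble this from the previous two lemmas. If $S(\bm{\alpha}) \ll \Pi^{n-k}$ then alternative (i) already holds, so I would assume $S(\bm{\alpha}) \gg \Pi^{n-k}$ for the remainder. My goal is then to produce \emph{individual} rational approximations $q_{ij}$, one for each coefficient, and glue them into a common denominator.

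First, for each diagonal index $i$, I would apply Lemma \ref{lemmaii}. The hypothesis \eqref{ncond} of that lemma is $n > 2k\gamma_{i}\gamma/\theta$, which follows from \eqref{ncond3} because $\gamma_{i}\le 1$. This gives integers $q_{ii}\ge 1$ with $q_{ii}\ll P_{i}^{\theta}$ and $\|q_{ii}\alpha_{ii}\|\le P_{i}^{-2+\theta}$. Next, for each off-diagonal pair $1\le i<j\le m$, I would apply Lemma \ref{lemmaij} with the choice
\[
\delta \;=\; \frac{\theta}{\gamma}-\frac{2k}{n}.
\]
This is strictly positive by \eqref{ncond3}, and it satisfies $\delta<1/\gamma$ because $\theta<1$, so the range restriction on $\delta$ is met; the hypothesis \eqref{ncond2} is satisfied since $n>2k\gamma/\theta>2k\gamma\ge 2k\gamma_{i}\gamma$. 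I then obtain integers $q_{ij}\ge 1$ with $q_{ij}\ll\Pi^{\theta/\gamma}$ and $\|q_{ij}\alpha_{ij}\|\le\Pi^{\theta/\gamma}(P_{i}P_{j})^{-1}$.

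The next step is to pool these into a single denominator. I would set $q:=\prod_{1\le i\le j\le m}q_{ij}$, so that each $q_{ij}$ divides $q$. Letting $b_{ij}$ be the integer nearest $q_{ij}\alpha_{ij}$, I define $a_{ij}:=(q/q_{ij})b_{ij}$; then $|q\alpha_{ij}-a_{ij}|=(q/q_{ij})\|q_{ij}\alpha_{ij}\|$. Multiplying the individual upper bounds, the diagonal factors contribute $\prod_{i}P_{i}^{\theta}=\Pi^{\theta}$ to $q$, while the $\binom{m}{2}$ off-diagonal factors contribute $\Pi^{m(m-1)\theta/(2\gamma)}$, so altogether $q\ll\Pi^{\theta(1+m(m-1)/(2\gamma))}$, which is the desired bound. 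For the approximation bounds, I would simply split $q/q_{ij}$ into the remaining factors: in the diagonal case $(i=j)$ one loses a factor $\Pi^{\theta}/P_{i}^{\theta}$ which combines with $P_{i}^{-2+\theta}$ to give $\Pi^{\theta(1+m(m-1)/(2\gamma))}P_{i}^{-2}$, and in the off-diagonal case one loses $\Pi^{\theta}\cdot\Pi^{(m(m-1)/2-1)\theta/\gamma}$ which combines with $\Pi^{\theta/\gamma}(P_{i}P_{j})^{-1}$ to give the target bound \eqref{approx}.

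The final bookkeeping step is the coprimality requirement. I would simply set $d:=\gcd(q,a_{11},a_{12},\ldots,a_{mm})$ and divide $q$ and every $a_{ij}$ by $d$; this can only decrease $q$ and can only decrease $|q\alpha_{ij}-a_{ij}|$ (by the same factor $d$), so both bounds are preserved, and the resulting integers satisfy the coprimality condition in the statement. There is no substantial obstacle here; the only subtlety is the precise calibration of $\delta$ in Step~2 so that $2k/n+\delta$ equals $\theta/\gamma$, which is exactly what makes the product of the off-diagonal denominator bounds collapse to $\Pi^{m(m-1)\theta/(2\gamma)}$ and produce the clean exponent $\theta(1+m(m-1)/(2\gamma))$ in the conclusion.
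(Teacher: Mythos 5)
Your proposal is correct and follows essentially the same strategy as the paper: combine the individual rational approximations from Lemmas \ref{lemmaii} and \ref{lemmaij} into a common denominator $q$, verify the bound on $q$ and the approximation errors $|q\alpha_{ij}-a_{ij}|$, and confirm coprimality. The only (minor) deviations from the paper's own proof are: (a) the paper takes $q$ to be the \emph{least common multiple} of the $q_{ij}$ rather than the product, and (b) because of that choice, coprimality of $q$ and $\mathbf{a}$ is argued directly (if $p^{r}\,\|\,q$ then $p^{r}\,\|\,q_{ij}$ for some $i,j$, so $p\nmid q/q_{ij}$ and $p\nmid b_{ij}$, hence $p\nmid a_{ij}$) instead of by dividing out $\gcd(q,\mathbf{a})$ at the end as you do. Either route works, and your calibration $\delta=\theta/\gamma-2k/n$ (versus the paper's ``$\delta$ sufficiently small'') is a valid variant that makes the exponents match exactly rather than by an inequality; since the final conclusion uses $\ll$ anyway, both give the same result.
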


\begin{proof}
Suppose that (i) does not hold.
Since  \eqref{ncond3} implies \eqref{ncond} and \eqref{ncond2} for all
$i, j \in \{1, \ldots, m\}$, we may apply
lemmata \ref{lemmaii} and \ref{lemmaij} to show that there exist
integers $q_{ij}, b_{ij} \quad (1 \leq i \leq j \leq m)$
satisfying
\begin{gather*}
  (q_{ij},b_{ij}) = 1,\\
  q_{ii} \ll P_{i}^{\theta},
  \quad |q_{ii}\alpha_{ii} - b_{ii}| \leq P_{i}^{\theta -2},\\
  q_{ij} \ll \Pi^{\frac{2k}{n}+\delta}, \quad |q_{ij}\alpha_{ij} - b_{ij}| \leq
  \Pi^{\frac{2k}{n}+\delta}(P_{i}P_{j})^{-1} \quad (1 \leq i < j \leq m),
\end{gather*}
whenever $0 < \delta \leq \frac{1}{\gamma}$.  
The condition \eqref{ncond3} implies that
\[
  \frac{2k}{n}+\delta < \frac{\theta}{\gamma},
\]
provided we choose $\delta$ to be a sufficiently small positive real number.

Define $q$ to be the least common multiple of the $q_{ij} \quad
(1 \leq i \leq j \leq m)$, and set $a_{ij} := \frac{qb_{ij}}{q_{ij}}$.
Then $q$ and the $a_{ij}$ are coprime: Let $p$ be a prime dividing $q$,
and let $p^r$ be the maximum power of $p$ dividing at least one of the
$q_{ij}$. By definition of $q$, we have $r \ge 1$.
Then if $p^r \, || \, q_{ij}$, then $p$ does not divide
$\frac{q}{q_{ij}}$. Since $(q_{ij}, b_{ij})=1$, then $p$ also does not
divide $b_{ij}$, whence $p$ does not divide $a_{ij}$.
Moreover,
\begin{align*}
  q &\leq \prod_{k=1}^m q_{kk}
  \prod_{1 \le k < l \le m} q_{kl} \ll
   \Pi^{\theta(1+\frac{m(m-1)}{2\gamma})},\\
  \frac{q}{q_{ii}} &\ll (\Pi P_i^{-1})^\theta
  \Pi^{\frac{\theta m(m-1)}{2\gamma}} \quad (1 \le i \le m),\\
  \frac{q}{q_{ij}} &\ll \Pi^\theta \Pi^{\frac{\theta}{\gamma}
  (\frac{m(m-1)}{2}-1)} \quad (1 \le i<j \le m),
\end{align*}
whence
\begin{align*}
  |q\alpha_{ii}-a_{ii}| = & \frac{q}{q_{ii}} |q_{ii}\alpha_{ii}-b_{ii}|
  \ll \frac{q}{q_{ii}} P_i^{\theta-2} \ll
  \Pi^{\theta(1+\frac{m(m-1)}{2\gamma})} P_i^{-2} \quad (1 \le i \le m),\\
  |q\alpha_{ij}-a_{ij}| = & \frac{q}{q_{ij}} |q_{ij}\alpha_{ij}-b_{ij}| \ll
  \frac{q}{q_{ij}} \Pi^{\theta/\gamma} (P_i P_j)^{-1} \\ \ll &
  \Pi^{\theta(1+\frac{m(m-1)}{2\gamma})} (P_i P_j)^{-1} \quad (1 \le i<j \le m),
\end{align*}
so the bound \eqref{approx} holds true for all $i, j \in \{1, \ldots, m\}$
such that $i \le j$.
\end{proof}

\section{minor arcs}

We are now in a position to set up the scene for an application of the
circle method, splitting the $\bm{\alpha}$ into two subsets,
where either $S(\bm{\alpha})$ is small, or where each $\alpha_{ij}$ is
well-approximated.

For coprime integers $q, \textbf{a} := a_{ij} \; (1 \leq i \leq j \leq m)$,
and $\Delta > 0$, define the major arc
\begin{equation}\label{Maq}
\M_{\textbf{a},q}(\Delta) := \{\bm{\alpha} \in [0,1)^{R} : |q\alpha_{ij} - a_{ij}| \ll
\Pi^{\Delta}(P_{i}P_{j})^{-1} \quad (1 \leq i \leq j \leq m)\}.
\end{equation}
Then we define the major arcs
$\M(\Delta)$ to be the union of the $\M_{\textbf{a},q}(\Delta)$
over all coprime integers 
$q, \textbf{a}$ such that $1 \le q \ll \Pi^{\Delta}$
and $1 \le a_{ij}<q \quad (1 \leq i \leq j \leq m)$.  We denote
the minor arcs by $\m(\Delta) := [0,1)^{R}\setminus \M(\Delta)$.

We may split the integral in \eqref{NAB} to see that
\begin{equation}\label{NABsplit}
N(A,B) = \int_{\M(\Delta)}S(\bm{\alpha}, \bb)\dalpha + \int_{\m(\Delta)}S(\bm{\alpha},\bb)\dalpha.
\end{equation}
We shall show the latter integral does not contribute to the main term of the asymptotic formula for $N(A,B)$
using the following corollary of Lemma \ref{Weyl}.

\begin{lemma}\label{Weyl2}
Let $\eps > 0$ and $0 < \Delta < \frac{2\gamma+m(m-1)}{2\gamma}$ be real numbers.
Then either (i) the bound
$$S(\bm{\alpha}) \ll \Pi^{n-\frac{n\Delta}{2\gamma+m(m-1)}+\eps}$$
holds true,
or (ii), we have $$\bm{\alpha} \in \M(\Delta).$$
\end{lemma}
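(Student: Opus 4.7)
The plan is a direct application of Lemma~\ref{Weyl} with parameters $\theta$ and $k$ calibrated so that its two alternatives match those of the present lemma.

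First I would choose
\[
  \theta := \frac{2\gamma\Delta}{2\gamma+m(m-1)}, \qquad
  k := \frac{n\Delta}{2\gamma+m(m-1)} - \eps.
\]
The hypothesis $0<\Delta<\frac{2\gamma+m(m-1)}{2\gamma}$ is exactly what is needed to guarantee $0<\theta<1$, and a short calculation gives the useful identity
\[
  \theta\Bigl(1+\frac{m(m-1)}{2\gamma}\Bigr)=\Delta,
\]
so the exponent appearing in the approximation \eqref{approx} of Lemma~\ref{Weyl} will be exactly $\Delta$, matching the definition of $\M(\Delta)$.

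Next I would verify the hypothesis \eqref{ncond3} of Lemma~\ref{Weyl}. With the above choices,
\[
  \frac{2k\gamma}{\theta}
  = \frac{2\gamma+m(m-1)}{\Delta}\Bigl(\frac{n\Delta}{2\gamma+m(m-1)}-\eps\Bigr)
  = n - \frac{\eps(2\gamma+m(m-1))}{\Delta} < n,
\]
so condition \eqref{ncond3} is automatic for every $\eps>0$. Lemma~\ref{Weyl} therefore applies, and I split on its two alternatives. If alternative (i) of Lemma~\ref{Weyl} holds, then $S(\bm{\alpha}) \ll \Pi^{n-k} = \Pi^{n - \frac{n\Delta}{2\gamma+m(m-1)}+\eps}$, which is alternative (i) here. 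If alternative (ii) holds, I obtain coprime integers $q, \mathbf{a}$ with $1\le q\ll \Pi^{\Delta}$ and $|q\alpha_{ij}-a_{ij}| \ll \Pi^{\Delta}(P_iP_j)^{-1}$ for all $1\le i\le j\le m$.

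It remains to put this into the exact shape of the major arc. Since $\bm{\alpha}\in[0,1)^R$ and the error term $\Pi^{\Delta}(P_iP_j)^{-1}$ is $o(q)$, each $a_{ij}$ lies in a bounded neighbourhood of $q\alpha_{ij}\in[0,q)$, so after a harmless translation of $a_{ij}$ by a multiple of $q$ (which leaves the approximation bound unchanged and preserves the coprimality of the tuple $(q,\mathbf{a})$ because $\gcd(q,a_{ij}) = \gcd(q,a_{ij}\bmod q)$) we may assume $1\le a_{ij}<q$. This gives $\bm{\alpha}\in \M_{\mathbf{a},q}(\Delta)\subseteq\M(\Delta)$, as required. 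There is no genuine obstacle in this proof; the only part requiring a moment's attention is the minor normalization of the $a_{ij}$ to fit the precise range demanded by the definition of $\M(\Delta)$.
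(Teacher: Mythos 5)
Your proof is correct and takes essentially the same route as the paper: the paper's proof is a one-line statement that sets $\theta = \frac{2\gamma\Delta}{2\gamma+m(m-1)}$ and $k = \frac{n\theta}{2\gamma} - \eps$ (which equals your $\frac{n\Delta}{2\gamma+m(m-1)} - \eps$) in Lemma~\ref{Weyl}, noting that \eqref{ncond3} is then satisfied. Your write-up is just a more detailed execution of the same calibration, including the check that $\theta\bigl(1+\frac{m(m-1)}{2\gamma}\bigr)=\Delta$ and the minor normalization of the $a_{ij}$, which the paper leaves implicit.
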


\begin{proof}
This follows by taking $k = \frac{n\theta}{2\gamma} - \eps$ in Lemma \ref{Weyl}, 
for $\theta = \frac{2\gamma\Delta}{2\gamma + m(m-1)}$, noting that \eqref{ncond3} is therefore
satisfied. 
\end{proof}

\begin{lemma}\label{minorarcs}
Suppose that $n > (2\gamma+m(m-1))(R+1)$.
Then for any $0 < \Delta < \frac{m+1}{R+1}$, we have
$$\int_{\m(\Delta)}S(\bm{\alpha},\bb)\dalpha \ll \Pi^{n-m-1-\delta},$$
for some $\delta > 0$.
\end{lemma}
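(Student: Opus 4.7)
The plan is to apply Lemma~\ref{Weyl2} to obtain a pointwise upper bound on $|S(\bm{\alpha})|$ valid on the minor arcs, and then integrate using only the trivial estimate that $\m(\Delta) \subseteq [0,1)^R$ has measure at most $1$.

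First, observe that $|S(\bm{\alpha},\bb)| = |S(\bm{\alpha})|$, since the two exponential sums differ only by the multiplicative factor $e\bigl(-\sum_{i \le j} \alpha_{ij} B_{ij}\bigr)$ of modulus one. Next, the hypothesis $\Delta < \frac{m+1}{R+1}$ implies $\Delta < \frac{2\gamma + m(m-1)}{2\gamma}$, so Lemma~\ref{Weyl2} is applicable with our given $\Delta$. For any $\bm{\alpha} \in \m(\Delta)$, alternative (ii) of that lemma is excluded by the very definition of the minor arcs, so alternative (i) must hold, yielding
\[
  |S(\bm{\alpha})| \ll \Pi^{n - \frac{n\Delta}{2\gamma + m(m-1)} + \eps}.
\]
Integrating this pointwise bound over $\m(\Delta)$ gives
\[
  \Bigl| \int_{\m(\Delta)} S(\bm{\alpha}, \bb)\,\dalpha \Bigr| \ll \Pi^{n - \frac{n\Delta}{2\gamma + m(m-1)} + \eps}.
\]

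It then remains to show that the exponent on the right is strictly less than $n - m - 1$ by a fixed positive amount $\delta$; equivalently, one needs $n\Delta > (m+1)(2\gamma + m(m-1))$. The hypothesis $n > (2\gamma + m(m-1))(R+1)$ rewrites as $\frac{n}{2\gamma + m(m-1)} > R+1$, and hence $\frac{(m+1)(2\gamma+m(m-1))}{n} < \frac{m+1}{R+1}$; so there is a non-empty sub-interval of $\bigl(0, \frac{m+1}{R+1}\bigr)$ in which $\Delta$ satisfies the required inequality strictly. Choosing $\Delta$ in this sub-interval and $\eps$ small enough to preserve the slack produces the required $\delta > 0$.

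The main obstacle is simply this calibration of exponents: the hypothesis on $n$ is precisely tuned so that the Weyl savings $\frac{n\Delta}{2\gamma+m(m-1)}$ can be made to strictly exceed $m+1$ while keeping $\Delta$ below the cutoff $\frac{m+1}{R+1}$ that ensures Lemma~\ref{Weyl2} is meaningful. No nontrivial mean-value input is needed because the Weyl bound, applied with $\Delta$ near its upper cutoff, already beats the target exponent $n-m-1$ by a fixed amount, and the measure of $[0,1)^R$ contributes only a factor of $1$.
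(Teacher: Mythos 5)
Your argument correctly handles the top end of the $\Delta$-range, but the lemma claims the bound for \emph{every} $\Delta$ in $\bigl(0, \frac{m+1}{R+1}\bigr)$, and your proof does not deliver this. You yourself observe that the trivial integration (measure $\le 1$) combined with the Weyl bound only beats $\Pi^{n-m-1}$ once $\Delta > \frac{(m+1)(2\gamma+m(m-1))}{n}$, and then you \emph{choose} $\Delta$ in that sub-interval. But the lemma statement quantifies over all $\Delta$ in the full range, and this generality is actually needed: Lemma~\ref{NABequals} applies the minor-arc estimate with $\Delta < \frac{1}{\gamma(2R+3)}$, which is typically far below the threshold $\frac{(m+1)(2\gamma+m(m-1))}{n}$ when $n$ is only slightly larger than $(2\gamma+m(m-1))(R+1)$. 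For such small $\Delta$, the set $\m(\Delta)$ is much larger and contains points where $|S(\bm{\alpha})|$ is close to $\Pi^n$; the one-shot pointwise bound plus trivial measure estimate simply fails there.

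The paper's proof repairs exactly this by the Davenport--Birch shelling argument: it introduces a finely spaced chain $\Delta=\Delta_0<\Delta_1<\cdots<\Delta_T=\frac{m+1}{R+1}$ and writes
\[
  \m(\Delta) = \m(\Delta_T) \cup \bigcup_{t=0}^{T-1}\bigl(\M(\Delta_{t+1})\setminus\M(\Delta_t)\bigr).
\]
On the outermost piece $\m(\Delta_T)$ your approach works verbatim, because $\Delta_T$ sits at the cutoff. On each shell $\M(\Delta_{t+1})\setminus\M(\Delta_t)$ one still has the Weyl bound with parameter $\Delta_t$, which is weak when $\Delta_t$ is small, but one compensates by bounding the \emph{measure} of the shell by $\ll\Pi^{\Delta_{t+1}(R+1)-m-1}$, which is also small when $\Delta_{t+1}$ is small. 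Choosing the gaps $\Delta_{t+1}-\Delta_t$ small compared with the slack $\delta$ makes the loss from the measure estimate smaller than the gain from the Weyl saving at every step, so all shells contribute $\ll\Pi^{n-m-1-\delta}$. Without this decomposition and the intermediate measure estimate your argument has a genuine gap.
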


\begin{proof}
We follow the method of Davenport and Birch, see for example
\S4 in \cite{Birch}.
Let $\delta > 0$ be a real number satisfying
\begin{equation}\label{deltadef}
\frac{n}{(2\gamma + m(m-1))} - (R+1) >\frac{2\delta}{\Delta},
\end{equation}
whose existence is guaranteed by the condition on $n$.
Define a sequence $\Delta_{0},\Delta_{1},\ldots,\Delta_{T}$ such that 
$$0 < \Delta = \Delta_{0} < \Delta_{1} < \cdots < \Delta_{T} = \frac{m+1}{R+1},$$
and has the property that
\begin{equation}\label{thetagap}
\Delta_{t+1} - \Delta_{t} < \frac{\delta}{(R+1)},
\end{equation}
for each $0 \leq t \leq T-1$. Note that
\begin{equation}
\label{induction}
  \m(\Delta) = \m(\Delta_T) \cup
  (\M(\Delta_T) \backslash \M(\Delta_{T-1}))
  \cup \ldots \cup
  (\M(\Delta_1) \backslash \M(\Delta_0)).
\end{equation}

By Lemma \ref{Weyl2}, we have for any $\eps > 0$,
\begin{align*}
\int_{\m(\Delta_{T})}|S(\bm{\alpha},\bb)|\dalpha = &
\int_{\m(\Delta_{T})}|S(\bm{\alpha})|\dalpha \\
\ll & \Pi^{n-\frac{n\Delta_{T}}{2\gamma + m(m-1)}+\eps}\\
< & \Pi^{n-\Delta_{T}(R+1+\frac{2\delta}{\Delta})+\eps}\\
< & \Pi^{n-m-1 - 2\delta + \eps},
\end{align*}
on using \eqref{deltadef}, and since $\Delta < \Delta_{T}$.  Therefore
$$\int_{\m(\Delta_{T})}|S(\bm{\alpha})|\dalpha \ll \Pi^{n-m-1 - \delta},$$
provided $\eps$ is small enough.

For $0 \leq t \leq T-1$, we have $\M(\Delta_{t+1})\setminus \M(\Delta_{t}) \subset \M(\Delta_{t+1})$, and hence
its measure is bounded by
\begin{align*}
 & \sum_{q \ll \Pi^{\Delta_{t+1}}}\sum_{\mathbf{a} (\bmod q)}
\prod_{1 \leq i \leq j
\leq m}\Big(q^{-1}\Pi^{\Delta_{t+1}}(P_{i}P_{j})^{-1}\Big)
\\ \ll & \sum_{q \ll \Pi^{\Delta_{t+1}}}\sum_{\mathbf{a}(\bmod q)}
q^{-R} \Pi^{R\Delta_{t+1}}\Pi^{-(m+1)}
\\ \ll & \Pi^{\Delta_{t+1}(R+1)- m-1}.
\end{align*}

We may therefore use Lemma \ref{Weyl2} to show that, for
sufficiently small $\eps >0$,
\begin{align*}
\int_{\M(\Delta_{t+1})
  \setminus \M(\Delta_{t})}|S(\bm{\alpha},\bb)|\dalpha
= & \int_{\M(\Delta_{t+1})
 \setminus \M(\Delta_{t})}|S(\bm{\alpha})|\dalpha\\
\ll&  
\Pi^{n-\frac{n\Delta_{t}}{2\gamma + m(m-1)}+
\Delta_{t+1}(R+1)- m-1 + \eps}\\
< & \Pi^{n-m-1 - \Delta_{t}(\frac{n}{2\gamma + m(m-1)}-(R+1)) + \delta
+ \eps},
\end{align*}
on using \eqref{thetagap}. Now $\Delta \leq \Delta_{t}$ and \eqref{deltadef}
yield
$$\int_{\M(\Delta_{t+1}) \setminus \M(\Delta_{t})}|S(\bm{\alpha})|\dalpha \ll
\Pi^{n-m-1 - \delta}.$$
By \eqref{induction},
this completes the proof, on noting that $T \ll 1$.
\end{proof}

\section{Major arcs}

In dealing with the major arcs, we shall find it more convenient
to enlarge the sets $\M_{\textbf{a},q}(\Delta)$ slightly.
Let $\M'_{\textbf{a},q}(\Delta)$ denote the set in \eqref{Maq},
but with the inequality
$$|q\alpha_{ij} - a_{ij}| \ll
q\Pi^{\Delta}(P_{i}P_{j})^{-1} \quad (1 \leq i \leq j \leq m)$$
instead, and let $\M'(\Delta)$ denote the corresponding union.

It follows from \eqref{NABsplit} and Lemma \ref{minorarcs} that, provided 
$$n > (2\gamma + m(m-1))(R+1), \qquad 0 < \Delta < \frac{m+1}{R+1},$$
we have
\begin{equation}\label{nab1}
N(A,B) = \int_{\M'(\Delta)}S(\bm{\alpha},\bb)\dalpha + O(\Pi^{n-m-1-\delta}),
\end{equation}
for some $\delta > 0$.

\begin{lemma}\label{Disjoint}
Suppose that $0 < \Delta < \frac{2}{3\gamma}$.  Then
for sufficiently large $P_1$
the major arcs $\M'_{\textbf{a},q}(\Delta)$ are disjoint.
Similarly, if $0<\Delta<\frac{1}{\gamma}$, then for sufficiently large
$P_1$ the $\M_{\textbf{a},q}(\Delta)$ are disjoint. 
\end{lemma}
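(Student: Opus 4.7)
The plan is a standard disjointness argument: assume $\bm{\alpha}$ lies in two major arcs $\M'_{\textbf{a}, q}(\Delta)$ and $\M'_{\textbf{a}', q'}(\Delta)$ indexed by distinct coprime tuples $(q, \textbf{a}) \ne (q', \textbf{a}')$, multiply the defining inequalities by $q'$ and $q$ respectively, subtract, and use the resulting integer constraint together with size considerations to force the two tuples to coincide.

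More concretely, for each $1 \le i \le j \le m$ one has
\[
  |q'a_{ij} - qa'_{ij}|
  \le q'|q\alpha_{ij}-a_{ij}| + q|q'\alpha_{ij}-a'_{ij}|
  \ll qq' \Pi^{\Delta}(P_iP_j)^{-1}
  \ll \Pi^{3\Delta}(P_iP_j)^{-1},
\]
using $q, q' \ll \Pi^{\Delta}$ from the definition of $\M'(\Delta)$. Since $P_i P_j \ge P_1^2$ and $P_1 = \Pi^{1/\gamma}$ by \eqref{gamma_useful} with $i=1$ (note $\gamma_1=1$), the right-hand side is bounded by $\Pi^{3\Delta - 2/\gamma}$ up to an absolute constant. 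The hypothesis $\Delta < 2/(3\gamma)$ makes this exponent strictly negative, so for $P_1$ sufficiently large the integer $q'a_{ij} - qa'_{ij}$ must vanish for every $i,j$. For the $\M_{\textbf{a},q}(\Delta)$ variant, the extra factor of $q$ in the defining inequality is absent, so the bound improves to $\Pi^{2\Delta}(P_iP_j)^{-1} \le \Pi^{2\Delta - 2/\gamma}$, which is $<1$ precisely when $\Delta < 1/\gamma$.

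Once $q'a_{ij} = qa'_{ij}$ for every pair $(i,j)$, the standard coprimality step finishes the proof: writing $d = (q,q')$ with $q = dq_1$, $q' = dq_1'$ and $(q_1,q_1')=1$, one deduces $q_1 \mid a_{ij}$ for all $i,j$, whence $q_1 \mid (q, a_{11}, \ldots, a_{mm}) = 1$, so $q_1=1$; symmetrically $q_1'=1$, and thus $q=q'$ and $a_{ij}=a'_{ij}$, contradicting our assumption. There is no genuine obstacle here, the only thing to be careful about is to track which exponent of $\Pi$ governs $P_iP_j$ in the worst case, which is $i=j=1$, giving $P_1^2 = \Pi^{2/\gamma}$ and producing exactly the two numerical thresholds $2/(3\gamma)$ and $1/\gamma$ in the statement.
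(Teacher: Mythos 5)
Your proposal is correct and follows essentially the same route as the paper: multiply and subtract the two rational approximations, bound $|q'a_{ij}-qa'_{ij}|$ by $\Pi^{3\Delta}(P_iP_j)^{-1}$ (or $\Pi^{2\Delta}(P_iP_j)^{-1}$ in the unprimed case), and use $P_iP_j \ge P_1^2 = \Pi^{2/\gamma}$ to force the integer to vanish. The only cosmetic difference is that the paper starts from a single pair $(i,j)$ with $a_{ij}q'\neq a'_{ij}q$ (which must exist if the arcs differ) and gets an immediate contradiction, whereas you show $q'a_{ij}=qa'_{ij}$ for all $(i,j)$ and then run the coprimality argument to deduce $(q,\textbf{a})=(q',\textbf{a}')$; these are logically equivalent.
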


\begin{proof}
Suppose that there exists $\bm{\alpha}$ lying in the intersection of two
different sets
of the form
$\M'_{\textbf{a},q}(\Delta)$. Then for some $i,j$ with
$1 \leq i \leq j \leq m$, there exist integers
$a_{ij},a'_{ij},q,q'$ with $a_{ij}q' \neq a'_{ij}q$, satisfying
\begin{align*}
q,q' \ll & \Pi^{\Delta},\\
|q\alpha_{ij} - a_{ij}| \ll & q\Pi^{\Delta}(P_{i}P_{j})^{-1},\\
|q'\alpha_{ij} - a'_{ij}| \ll & q'\Pi^{\Delta}(P_{i}P_{j})^{-1}.
\end{align*}
Hence, using \eqref{Piprop}, \eqref{gamma_useful} and
$\gamma_1=1$, we find that
\begin{align*}
1  \leq |a_{ij}q' - a'_{ij}q| = & |q'(a_{ij}-q\alpha_{ij})
+q(q'\alpha_{ij}-a'_{ij})| \\
\le & q'|q\alpha_{ij} - a_{ij}| + q|q'\alpha_{ij} - a'_{ij}|\\
\ll & \Pi^{3\Delta}(P_{i}P_{j})^{-1}\\
= & P_{1}^{3\gamma\Delta- \frac{1}{\gamma_{i}}-\frac{1}{\gamma_{j}}}.
\end{align*}
This gives a contradiction given our assumption on $\Delta$, and noting
that  $\frac{1}{\gamma_{i}}+\frac{1}{\gamma_{j}}\geq 2$.
The proof of the second statement is completely analogous.
\end{proof}

Whenever $\bm{\alpha} \in \M'_{\textbf{a},q}(\Delta)$,
we may write, for each $1 \leq i \leq j \leq m$,
\begin{equation}\label{alphaMaq}
\alpha_{ij} = \frac{a_{ij}}{q}+ \beta_{ij}, \quad |\beta_{ij}| \ll \Pi^{\Delta}(P_{i}P_{j})^{-1}
\end{equation}
for suitable integers $a_{ij}$ and $1 \le q \ll \Pi^\Delta$.
Define
$$S_{\textbf{a},q}(\textbf{b}) := \sum_{\zz_{1}\,(\bmod{q})}\cdots\sum_{\zz_{m}\,(\bmod{q})}
e_{q}\Big(\sum_{1 \leq i \leq j \leq m}a_{ij}(\zz_{i}^{T}A\zz_{j} - B_{ij})\Big),$$
and let $S_{\textbf{a},q} := S_{\textbf{a},q}(\textbf{0})$.  Also, define
$$I(\textbf{P},\bm{\beta}) := \int_{[-1,1]^{mn}}e\Big(\sum_{1 \leq i \leq j \leq m}P_{i}P_{j}\beta_{ij}\vv_{i}^{T}A\vv_{j}\Big)
\dv_{1}\cdots\dv_{m},$$ 
and let
\[
  I(\bm{\beta}) := I((1,\ldots,1),\bm{\beta}).
\]
\begin{lemma}\label{Salpha}
For $\bm{\alpha} \in \M'_{\textbf{a},q}(\Delta)$, let \eqref{alphaMaq} hold.  Then we have
$$S(\bm{\alpha}, \textbf{b}) = q^{-mn}
\Pi^n
S_{\textbf{a},q}(\textbf{b})I(\textbf{P},\bm{\beta})
e\Big(-\sum_{1 \leq i \leq j \leq m}\beta_{ij}B_{ij}\Big)
+ O\Big(\Pi^{n + 2\Delta}P_{1}^{-1}\Big).$$
\end{lemma}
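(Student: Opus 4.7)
The plan is to follow the standard major-arcs recipe: substitute $\xx_i = q\yy_i + \zz_i$ with $\zz_i$ running through residues modulo $q$ and $\yy_i \in \Z^n$, factor off the complete rational exponential sum, and approximate the remaining smooth sum over $\yy_i$ by an integral via the multidimensional trapezoidal estimate.

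After this substitution, and using $\alpha_{ij} = a_{ij}/q + \beta_{ij}$, the rational piece $e_q(a_{ij}\xx_i^T A\xx_j)$ collapses to $e_q(a_{ij}\zz_i^T A\zz_j)$, because every cross term involving a factor of $q$ contributes an integer under the outer $e(\cdot)$. The shift $-B_{ij}$ in the definition of $S(\bm{\alpha},\bb)$ splits cleanly: its $a_{ij}/q$ part combines with the $\zz$-sum to yield $S_{\textbf{a},q}(\bb)$, while its $\beta_{ij}$ part pulls out of the $\yy$-sum as the factor $e(-\sum_{i \le j}\beta_{ij}B_{ij})$. What remains, for each fixed $\zz$, is a Riemann-style sum in $\yy_i$ over the box $\{|q\yy_i+\zz_i|\le P_i\}$ of the exponential $e(F(\yy))$ with $F(\yy) := \sum_{i\le j}\beta_{ij}(q\yy_i+\zz_i)^T A(q\yy_j+\zz_j)$.

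I would then replace each such inner sum by the corresponding integral using the standard sum-to-integral bound: on a box of volume $V$ with $|\nabla F|\le M$, the discrepancy is $O(VM)$. Changing variables $\vv_i = \xx_i/P_i$ in the integral produces the Jacobian $\prod_i P_i^n/q^n = \Pi^n/q^{mn}$ and converts the integral into $I(\textbf{P},\bm{\beta})$, while summing over the $q^{mn}$ residue classes yields the main term $q^{-mn}\Pi^n S_{\textbf{a},q}(\bb)I(\textbf{P},\bm{\beta})e(-\sum\beta_{ij}B_{ij})$. A direct computation gives $|\partial F/\partial y_{i\ell}| \ll q\sum_j|\beta_{ij}|P_j \ll q\Pi^\Delta/P_i$, so with $q \ll \Pi^\Delta$ and $P_1\le P_i$ we have $M\ll \Pi^{2\Delta}/P_1$. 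Combining with the per-class volume $V \ll \Pi^n/q^{mn}$ and summing over $q^{mn}$ residue classes gives the claimed error $O(\Pi^{n+2\Delta}/P_1)$.

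The main obstacle is the bookkeeping in the Riemann-sum step: one has to check that $P_i/q$ is large enough for the box $\{|q\yy_i+\zz_i|\le P_i\}$ to contain many lattice points, so that the trapezoidal error applies cleanly and boundary effects are absorbed into $VM$. This follows from the major-arc range $q \ll \Pi^\Delta$ together with $\Delta < (m+1)/(R+1)$, which ensures $q$ is genuinely much smaller than $P_1$. The other point to be mindful of is that the derivative bound $M$ must be taken as the worst case over $i$, i.e. at $i=1$; this is precisely where the factor $P_1^{-1}$ (rather than a larger $P_i^{-1}$) enters the final error.
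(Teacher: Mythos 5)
Your argument is essentially identical to the paper's proof: the same substitution $\xx_i = \zz_i + q\yy_i$, the same factorisation into the complete sum $S_{\textbf{a},q}(\bb)$ times a smooth sum over $\yy_i$, the same replacement of that sum by an integral via the gradient-controlled trapezoidal estimate (the paper cites Iwaniec--Kowalski, Lemma~4.1, applied coordinatewise by induction), the same derivative bound $\left|\partial F/\partial y_{s\ell}\right| \ll q\Pi^\Delta P_s^{-1}$, and the same change of variables $\vv_i = P_i^{-1}\xx_i$ producing the Jacobian factor $\Pi^n/q^{mn}$ and the integral $I(\textbf{P},\bm{\beta})$. The arithmetic of the error term checks out: per residue class the discrepancy is $\ll \Pi^{n+\Delta}/(q^{mn-1}P_1)$, and summing over the $q^{mn}$ classes with $q\ll \Pi^\Delta$ gives the stated $O(\Pi^{n+2\Delta}P_1^{-1})$.

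One small slip in your commentary, which does not affect the proof: you assert that $\Delta < \tfrac{m+1}{R+1}$ guarantees $q \ll P_1$. That implication would require $\tfrac{m+1}{R+1} \le \tfrac{1}{\gamma}$, and since $\gamma \ge m$ this can fail already at $m=2$ (there $\tfrac{m+1}{R+1} = \tfrac{3}{4}$ while $\tfrac{1}{\gamma}\le\tfrac12$). More importantly, Lemma~\ref{Salpha} imposes no constraint on $\Delta$; the sum-to-integral estimate and the resulting error bound hold regardless of whether the boxes $\{|q\yy_i+\zz_i|\le P_i\}$ contain many lattice points (if $q$ is large the error bound is simply not small). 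The hypothesis $\Delta < \tfrac{1}{\gamma(2R+3)}$, imposed later in Lemma~\ref{NABequals}, is what ensures the error is genuinely of lower order and, incidentally, that $q\ll P_1$. So the issue you identified as ``the main obstacle'' is not actually a hypothesis the present lemma needs, and your justification for dismissing it was not quite right, but the core argument is sound and matches the paper's.
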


\begin{proof}
Using \eqref{alphaMaq}, we have
\begin{align*}
  & S(\bm{\alpha}, \textbf{b})\\
 = & \sum_{|\xx_{1}| \leq P_{1}}\cdots\sum_{|\xx_{m}| \leq P_{m}}
e_{q}\Big(\sum_{1 \leq i \leq j \leq m}a_{ij}(\xx_{i}^{T}A\xx_{j} - B_{ij})\Big)e\Big(\sum_{1 \leq i \leq j \leq m}\beta_{ij}(\xx_{i}^{T}A\xx_{j} - B_{ij})\Big).
\end{align*}
Letting $\xx_{i} = \zz_{i}+q\yy_{i} \; (1 \leq i \leq m)$, we obtain
\begin{align*}
S(\bm{\alpha}, \textbf{b}) = & \sum_{\zz_{1},\ldots, \zz_{m}\,(\bmod{q})}
e_{q}\Big(\sum_{1 \leq i \leq j \leq m}a_{ij}(\zz_{i}^{T}A\zz_{j} - B_{ij})\Big) \\
\times &
\sum_{\yy_{1},\ldots,\yy_{m}}e\Big(\sum_{1 \leq i \leq j \leq m}\beta_{ij}((\zz_{i}+q\yy_{i})^{T}A(\zz_{j}+q\yy_{j}) - B_{ij})\Big),
\end{align*}
where the sum over $\yy_{1},\ldots,\yy_{m} \in \Z^n$
is such that $|\zz_{i} + q\yy_{i}| \leq P_{i}$ for $1 \leq i \leq m$.

It follows from Iwaniec and Kowalski \cite[Lemma 4.1]{IwaniecKowalski} and a simple induction argument that the sum 
$$\sum_{\yy_{1},\ldots,\yy_{m}}e\Big(\sum_{1 \leq i \leq j \leq m}\beta_{ij}(\zz_{i}+q\yy_{i})^{T}A(\zz_{j}+q\yy_{j})\Big)$$
may be replaced with the integral
\begin{equation}\label{inty}
\int_{\substack{\yy_{1},\ldots,\yy_{m} \in \R^n:\\|\zz_{i} + q\yy_{i}| \leq P_{i}}}
e\Big(\sum_{1 \leq i \leq j \leq m}\beta_{ij}(\zz_{i}+q\yy_{i})^{T}A(\zz_{j}+q\yy_{j})\Big)\dy_{1}\cdots\dy_{m},
\end{equation}
with  error
$$\ll \max_{\substack{1 \leq s \leq m\\1 \leq t \leq n}}\Big|\frac{\partial}{\partial y_{st}}e\Big(\sum_{1 \leq i \leq j \leq m}\beta_{ij}(\zz_{i}+q\yy_{i})^{T}A(\zz_{j}+q\yy_{j})\Big)\Big|\frac{\Pi^{n}}{q^{mn}} +
\frac{\Pi^{n}}{q^{mn-1}P_{1}}.$$

For any $1 \leq s \leq m, 1 \leq t \leq n$, on substituting
$\mathbf{u}_i = \mathbf{z}_i + q\mathbf{y}_i$ we find that
\begin{align*}
  & \frac{\partial}{\partial \mathbf{y}_s}
  \sum_{1 \leq i \leq j \leq m}
  \beta_{ij}(\zz_{i}+q\yy_{i})^{T}A(\zz_{j}+q\yy_{j})\\
  = & q \frac{\partial}{\partial \mathbf{u}_s}
  \sum_{1 \le i \le j \le m} \beta_{ij} \mathbf{u}_i^T A \mathbf{u}_j\\
  = & q \left(2 \beta_{ss} A \mathbf{u}_s
  + \sum_{1 \le i < s} \beta_{is} A \mathbf{u}_i
  + \sum_{s<j\le m} \beta_{sj} A \mathbf{u}_j \right),
\end{align*}
whence \eqref{alphaMaq} and $|\mathbf{u}_i| \le P_i \;
(1 \le i \le m)$ yield
\begin{align*}
& \frac{\partial}{\partial y_{st}}e\Big(\sum_{1 \leq i \leq j \leq m}
\beta_{ij}(\zz_{i}+q\yy_{i})^{T}A(\zz_{j}+q\yy_{j})\Big)\\
\ll &  q\Big(\sum_{1 \leq i \leq s}|\beta_{is}|P_{i} +
\sum_{s<j \le m}|\beta_{sj}|P_{j}\Big)\\
\ll & q\Pi^{\Delta}P_{s}^{-1},
\end{align*}
and therefore the difference between the sum and the integral is
$$O\Big(\frac{\Pi^{n+\Delta}}{q^{mn-1}P_{1}}\Big).$$

Making the change of variables
$\vv_{i} = P_i^{-1} (\zz_{i} + q\yy_{i}) \;
(1 \leq i \leq m)$, in \eqref{inty}, we see that
$$\int_{\substack{\yy_{1},\ldots,\yy_{m} \in \R^n:\\|\zz_{i} + q\yy_{i}| \leq P_{i}}}
e\Big(\sum_{1 \leq i \leq j \leq m}\beta_{ij}(\zz_{i}+q\yy_{i})^{T}A(\zz_{j}+q\yy_{j})\Big)\dy_{1}\cdots\dy_{m} =
q^{-mn}\Pi^{n}I(\textbf{P},\bm{\beta}).$$
The lemma now follows easily on using the trivial bound
$|S_{\mathbf{a}, q}(\mathbf{b})| \le q^{mn}$ and $q \ll \Pi^\Delta$.
\end{proof}

For $Q \geq 1$ define
\begin{equation}\label{SR}
\mathfrak{S}(Q;\bb) := 
\sum_{q \ll Q}q^{-mn}\sum_{\substack{\textbf{a}\,(\bmod{q})\\(\textbf{a},q)=1}}S_{\textbf{a},q}(\bb),
\end{equation}
and
\begin{equation}\label{IR}
\mathfrak{I}(Q; \cc) := \int_{|\bm{\eta}| \ll Q}I(\bm{\eta})e\Big(-\sum_{1 \leq i \leq j \leq m}\eta_{ij}c_{ij}\Big)\deta,
\end{equation}
for $\cc =(c_{ij})_{1 \leq i \leq j \leq m} \in \mathbb{R}^{R}$.

\begin{lemma}\label{NABequals}
Let $0 < \Delta < \frac{1}{\gamma(2R+3)}$.  Then there exists $\delta > 0$ such that
\begin{equation*}
N(A,B) = \Pi^{n-m-1}\mathfrak{S}(\Pi^{\Delta};\bb)
\mathfrak{I}(\Pi^{\Delta};\cc) + O\Big(\Pi^{n-m-1-\delta}\Big),
\end{equation*}
where $c_{ij} = (P_{i}P_{j})^{-1}B_{ij}$ for $1 \leq i \leq j \leq m$.
\end{lemma}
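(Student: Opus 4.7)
The plan is to substitute the asymptotic from Lemma~\ref{Salpha} into the identity \eqref{nab1}. First I would verify that the hypothesis $\Delta < \frac{1}{\gamma(2R+3)}$ is compatible with the conditions of the preceding results: since $\gamma = \sum_{i=1}^{m} 1/\gamma_i \geq m \geq 1$ and $R = m(m+1)/2 \geq 1$, one checks directly that
\[
  \frac{1}{\gamma(2R+3)} < \min\Big\{\tfrac{m+1}{R+1}, \tfrac{2}{3\gamma}\Big\},
\]
so \eqref{nab1} gives $N(A,B) = \int_{\M'(\Delta)} S(\bm{\alpha}, \bb)\dalpha + O(\Pi^{n-m-1-\delta'})$ for some $\delta' > 0$, and Lemma~\ref{Disjoint} ensures that the arcs $\M'_{\mathbf{a},q}(\Delta)$ are pairwise disjoint.

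Next I would split the integral over $\M'(\Delta)$ as a sum over the disjoint arcs $\M'_{\mathbf{a},q}(\Delta)$. On each arc I write $\alpha_{ij} = a_{ij}/q + \beta_{ij}$, apply Lemma~\ref{Salpha}, and substitute $\eta_{ij} = P_iP_j \beta_{ij}$. The Jacobian contributes $\prod_{1 \leq i \leq j \leq m}(P_iP_j)^{-1} = \Pi^{-(m+1)}$ (each $P_k$ appears with multiplicity $m+1$ in that product), the integrand $I(\mathbf{P},\bm{\beta})$ becomes $I(\bm{\eta})$ by definition of the former, and the exponential $e(-\sum \beta_{ij} B_{ij})$ becomes $e(-\sum \eta_{ij} c_{ij})$ with the stated $c_{ij} = (P_iP_j)^{-1}B_{ij}$. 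The $\bm{\eta}$-region is $|\bm{\eta}| \ll \Pi^\Delta$, so collecting the main terms over coprime $(\mathbf{a},q)$ with $q \ll \Pi^\Delta$ produces exactly
\[
  \Pi^{n-m-1} \, \mathfrak{S}(\Pi^\Delta; \bb) \, \mathfrak{I}(\Pi^\Delta; \cc).
\]

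The main obstacle is bounding the accumulated error. Each arc $\M'_{\mathbf{a},q}(\Delta)$ has $\bm{\alpha}$-measure $\ll \Pi^{R\Delta-(m+1)}$, and the number of admissible pairs $(\mathbf{a},q)$ with $q \ll \Pi^\Delta$ is $\ll \Pi^{(R+1)\Delta}$. Multiplying by the per-point error $\Pi^{n+2\Delta} P_1^{-1}$ from Lemma~\ref{Salpha}, the total error is
\[
  \ll \Pi^{(R+1)\Delta} \cdot \Pi^{R\Delta-(m+1)} \cdot \Pi^{n+2\Delta} P_1^{-1}
  = \Pi^{n-m-1} \cdot \Pi^{(2R+3)\Delta} P_1^{-1}.
\]
Since $\gamma_1 = 1$, equation \eqref{gamma_useful} gives $\Pi = P_1^\gamma$, so this bound equals $\Pi^{n-m-1 - (1/\gamma - (2R+3)\Delta)}$; the hypothesis $(2R+3)\Delta < 1/\gamma$ yields a positive saving, and combining this with $\delta'$ produces the claimed $\delta > 0$.
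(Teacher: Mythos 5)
Your proposal is correct and follows essentially the same route as the paper: feed Lemma~\ref{Salpha} into \eqref{nab1}, use disjointness of the $\M'_{\mathbf{a},q}(\Delta)$, change variables $\eta_{ij}=P_iP_j\beta_{ij}$ (correctly identifying the Jacobian $\Pi^{-(m+1)}$ and that $I(\mathbf{P},\bm{\beta})=I(\bm{\eta})$), and control the accumulated error by $\Pi^{n-m-1+(2R+3)\Delta}P_1^{-1}$ together with $P_1=\Pi^{1/\gamma}$. The only cosmetic difference is that you explicitly verify $\frac{1}{\gamma(2R+3)} < \min\{\tfrac{m+1}{R+1},\tfrac{2}{3\gamma}\}$, which the paper leaves implicit in the phrase "by our assumption on $\Delta$."
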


\begin{proof}
By our assumption on $\Delta$,
the observation \eqref{nab1} and Lemma \ref{Disjoint},
for some $\delta > 0$ we have 
\begin{align*}
N(A,B) = & \int_{\M'(\Delta)}S(\bm{\alpha},\bb)\dalpha + O\Big(\Pi^{n-m-1-\delta}\Big)\\
= &
\sum_{q \ll \Pi^{\Delta}}
\sum_{\substack{\textbf{a}\,(\bmod{q})\\(\textbf{a},q)=1}}\int_{\M'_{\textbf{a},q}(\Delta)} S(\bm{\alpha},\bb)\dalpha +
O\Big(\Pi^{n-m-1-\delta}\Big).
\end{align*}
We shall use Lemma \ref{Salpha} to approximate $S(\bm{\alpha},\bb)$ on $\M'_{\textbf{a},q}(\Delta)$.  The error term,
when integrated over $\M'(\Delta)$, is bounded by
\begin{align*}
&\ll \sum_{q \ll \Pi^{\Delta}}
\sum_{\substack{\textbf{a}\,(\bmod{q})\\(\textbf{a},q)=1}}\int_{|\beta_{ij}| \ll \Pi^{\Delta}
(P_{i}P_{j})^{-1}}\Pi^{n + 2\Delta}P_{1}^{-1}\dbeta\\
& \ll \sum_{q \ll \Pi^{\Delta}}
\sum_{\substack{\textbf{a}\,(\bmod{q})\\(\textbf{a},q)=1}}
\Pi^{n+2\Delta+R\Delta-(m+1)} P_1^{-1}\\
& \ll \Pi^{n-m-1 + \Delta(2R+3)}P_{1}^{-1}\\
& \ll \Pi^{n-m-1 - \delta'},
\end{align*} 
for some $\delta' > 0$, by \eqref{gamma_useful}, $\gamma_1=1$ and
our assumption on $\Delta$.  This contributes to the error term in the lemma.
The main term gives
$$N(A,B) =
\Pi^n
\mathfrak{S}(\Pi^{\Delta};\bb)
\int_{\bm{\beta}}I(\textbf{P},\bm{\beta})e\Big(-\sum_{1 \leq i \leq j \leq m}\beta_{ij}B_{ij}\Big)\dbeta,$$
where the integral is over $|\beta_{ij}| \ll \Pi^{\Delta}(P_{i}P_{j})^{-1}
\; (1 \leq i \leq j \leq m)$.
Substituting $\eta_{ij} = P_{i}P_{j}\beta_{ij}$ completes the proof of the lemma.
\end{proof}

Let
\[
  \mathfrak{S}(\bb) := \mathfrak{S}(\infty;\bb)
\]
be the singular series and
\[
  \mathfrak{I}(\cc) := \mathfrak{I}(\infty ; \cc)
\]
be the singular integral, with $\cc$ as in Lemma \ref{NABequals}.

\begin{lemma}\label{SRconv}
Assume that $n > (2\gamma + m(m-1))(R+1)$.  Then $\mathfrak{S}(\bb)$ is absolutely convergent.  Moreover, we have
$$|\mathfrak{S}(\bb) - \mathfrak{S}(\Pi^{\Delta};\bb)|
\ll \Pi^{-2\delta + \eps},$$
for some $\delta >0$, uniformly in $\bb$.
\end{lemma}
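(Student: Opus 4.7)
The plan is to reduce the estimate to a uniform power-saving bound on the complete exponential sum $S_{\mathbf{a},q}$, and then sum the resulting series geometrically over $q$.

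The key technical step is to show that, under the hypothesis $n > (2\gamma+m(m-1))(R+1)$, there exists some $\delta_{0} > 0$ such that
$$
|S_{\mathbf{a},q}| \ll q^{mn-(R+1)-\delta_{0}+\eps}
$$
uniformly for all coprime residue vectors $\mathbf{a}$ modulo $q$. To establish this I would apply Lemma \ref{Weyl} to the auxiliary exponential sum in which every box side-length is taken equal to $q$; in that reinterpretation the role of $\gamma_{i}$ is played by $1$, of $\gamma$ by $m$, and $\Pi$ by $q^{m}$, while $m\theta(m+1)/2 = \theta R$ in condition \eqref{approx}. Choosing $k = (R+1+\delta_{0})/m$ and $\theta$ slightly below $1/R$, condition \eqref{ncond3} translates into $n > 2R(R+1+\delta_{0})$, which is implied by the hypothesis since $\gamma \ge m$ gives $(2\gamma+m(m-1))(R+1) \ge m(m+1)(R+1) = 2R(R+1)$. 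Evaluating at $\bm{\alpha} = \mathbf{a}/q$: alternative (ii) of Lemma \ref{Weyl} would yield a coprime rational $(q',\mathbf{a}')$ with $q' \ll q^{\theta R}$ and $|q'a_{ij} - qa'_{ij}| \ll q^{\theta R - 1}$. Since $\theta R < 1$, the latter is $<1$ for $q$ sufficiently large, forcing $q'a_{ij} = qa'_{ij}$ for every $i,j$; coupled with $(\mathbf{a},q)=1$ this yields $q \mid q'$, contradicting $q' < q$. Hence alternative (i) must hold, giving $|S(\mathbf{a}/q)| \ll q^{m(n-k)}$. For equal side lengths $P_{i}=q$, the restricted sum $S(\mathbf{a}/q)$ factors (up to bounded boundary terms) as a fixed power of $2$ times the complete sum $S_{\mathbf{a},q}$, so the bound transfers to $|S_{\mathbf{a},q}|$.

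With that in hand, noting that $|S_{\mathbf{a},q}(\mathbf{b})| = |S_{\mathbf{a},q}|$ (the factor $e_{q}(-\sum a_{ij}B_{ij})$ has modulus one), and that there are at most $q^{R}$ coprime residue vectors $\mathbf{a}$ modulo $q$, I would bound
$$
|\mathfrak{S}(\mathbf{b}) - \mathfrak{S}(Q;\mathbf{b})|
\le \sum_{q \gg Q} q^{-mn} \cdot q^{R} \cdot q^{mn-R-1-\delta_{0}+\eps}
\ll Q^{-\delta_{0}+\eps}.
$$
Absolute convergence of $\mathfrak{S}(\mathbf{b})$ follows from the same telescoping argument (taking $Q$ small); the second claim follows on setting $Q = \Pi^{\Delta}$ and choosing $\delta$ so that $2\delta < \Delta\delta_{0}$, which is legitimate since $\Delta > 0$ is fixed in the statement.

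The main obstacle is Step 1: one must confirm that the hypothesis $n > (2\gamma+m(m-1))(R+1)$ is strong enough to sustain Lemma \ref{Weyl} when all the box sides are collapsed to $q$, and rigorously justify that the transfer from $S(\mathbf{a}/q)$ to the complete sum $S_{\mathbf{a},q}$ costs only a bounded factor. The form of the hypothesis is essentially calibrated so that the power saving on $|S_{\mathbf{a},q}|$ exceeds $R+1$, which is precisely what is needed for absolute convergence of the double sum over $q$ and over $\mathbf{a} \pmod{q}$.
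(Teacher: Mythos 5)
The proposal is correct and takes essentially the same route as the paper: apply the Weyl-type inequality (Lemma \ref{Weyl}, of which Lemma \ref{Weyl2} is the corollary the paper actually invokes) to the complete sum by setting all side lengths $P_i=q$ and $\gamma=m$, rule out the major-arc alternative using $(\mathbf{a},q)=1$ and the smallness of $q'$, and then sum the resulting power-saving bound on $|S_{\mathbf{a},q}|$ geometrically over $q$. The only cosmetic differences are the parameter bookkeeping ($k=(R+1+\delta_0)/m$ and $\theta$ just below $1/R$ versus the paper's $\Delta=1/m-\eps$, yielding the saving $n/(2R)>R+1$) and that the paper's $\delta$ is taken from \eqref{deltadef} to synchronize with the minor-arc estimate, whereas you introduce an independent $\delta_0$ and then shrink $\delta$; both are legitimate since the lemma only asserts the existence of some $\delta>0$.
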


\begin{proof}
Let $(q, \mathbf{a})=1$.
We may use Lemma \ref{Weyl2}, with $P_{1} = \cdots = P_{m} = q$, $\gamma = m$, and $\alpha_{ij} = \frac{a_{ij}}{q}$ 
for $1 \leq i \leq j \leq m$, to see that either
\begin{equation}\label{Saqtheta}
|S_{\textbf{a},q}(\bb)| \ll q^{mn - \frac{n\Delta}{m+1}+\eps},\end{equation}
or that $(\frac{a_{ij}}{q})_{1 \leq i \leq j \leq m} \in \M(\Delta)$, for any $0 < \Delta < \frac{m+1}{2}$.
The latter option says that there exist coprime integers $q', (a'_{ij})_{1 \leq i \leq j \leq m}$
which satisfy
$$q' \ll q^{m\Delta}, \quad |q'a_{ij} - qa'_{ij}| \ll q^{m\Delta - 1}
\quad (1 \le i \le j \le m).$$
Since $(q, \mathbf{a})=1$,
these conditions are clearly impossible to satisfy
if $\Delta < \frac{1}{m}$, and therefore
\eqref{Saqtheta} must hold.  Setting $\Delta = \frac{1}{m} - \eps$ for any $\eps >0$ gives
\begin{equation}\label{Saqbound}
|S_{\textbf{a},q}(\bb)| \ll q^{mn - \frac{n}{2R}+\eps}.\end{equation}

Therefore we have
\[
  \sum_{q=1}^\infty q^{-mn}
  \sum_{\substack{\textbf{a}\,(\bmod{q})\\(\textbf{a},q)=1}}
  \left| S_{\mathbf{a}, q}(\mathbf{b}) \right| 
  \ll \sum_{q=1}^{\infty}q^{R-\frac{n}{2R}+\eps}
  \ll  \sum_{q=1}^{\infty}q^{-1-\frac{1}{2R}+\eps} \ll  1,
\]
since $n \geq (2\gamma + m(m-1))(R+1) + 1 \geq 2R(R+1) + 1$,
showing that $\mathfrak{S}(\bb)$ is absolutely convergent.

For the second part of the lemma, let $\delta > 0$ be as in \eqref{deltadef}.  Then we use \eqref{Saqbound} to see that
\begin{align*}|\mathfrak{S}(\bb) - \mathfrak{S}(\Pi^{\Delta};\bb)|
\ll & \sum_{q \gg \Pi^{\Delta}}q^{R-\frac{n}{2R}+\eps}\\
\ll & \sum_{q \gg \Pi^{\Delta}}q^{R-\frac{n}{2\gamma+m(m-1)}+\eps}\\
\ll & \sum_{q \gg \Pi^{\Delta}}q^{-\frac{2\delta }{\Delta}-1+\eps}\\
\ll & \Pi^{-2\delta + \eps}.
\end{align*}
\end{proof}

\begin{lemma}\label{Ietabound}
We have
$$I(\bm{\eta}) \ll \min\{1, \max|\eta_{ij}|^{-\frac{n}{2R}+\eps}\}.$$
\end{lemma}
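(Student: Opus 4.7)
The trivial bound $I(\bm{\eta}) \ll 1$ is immediate, since the integrand has modulus one on a domain of bounded volume. For the nontrivial bound, set $T := \max_{i \le j} |\eta_{ij}|$, let $(i_0,j_0)$ with $i_0 \le j_0$ be a pair attaining the maximum, and extend the $\eta_{ij}$ symmetrically by $\eta_{ji} := \eta_{ij}$ for $i < j$; assume $T$ is large, else the trivial bound suffices.

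The plan is a Weyl-type differencing argument applied directly to $I(\bm{\eta})$, singling out the block of integration variables $\vv_{i_0}$. Let $J = J(\vv_1,\ldots,\widehat{\vv}_{i_0},\ldots,\vv_m) := \int_{[-1,1]^n} e(\Phi)\,d\vv_{i_0}$ denote the inner integral, so that $I(\bm{\eta}) = \int J \prod_{j \ne i_0} d\vv_j$. Cauchy--Schwarz reduces matters to estimating $\int |J|^2 \prod_{j \ne i_0} d\vv_j$. Expanding $|J|^2$ as a double integral over $\vv_{i_0},\vv'_{i_0}$ and substituting $\hh = \vv_{i_0}-\vv'_{i_0}$, the quadratic-in-$\vv_{i_0}$ terms telescope, leaving a phase
$$2\eta_{i_0 i_0}\vv'^T_{i_0} A \hh + \eta_{i_0 i_0}\hh^T A \hh + \sum_{j \ne i_0}\eta_{i_0 j}\hh^T A \vv_j$$
which is linear in $\vv'_{i_0}$ and in each $\vv_j$ with $j \ne i_0$.

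Now interchange the order of integration and carry out the $\vv'_{i_0}$-integral and each $\vv_j$-integral ($j \ne i_0$) first. Each is a product of $n$ one-dimensional integrals of the form $\int_{-1}^1 e(\lambda v)\,dv \ll \min(2,|\lambda|^{-1})$, where $\lambda$ is a scalar multiple of a component of $A\hh$. After the change of variables $\hh \mapsto \zz = A\hh$ (valid since $A$ is non-singular), the remaining $n$-dimensional $\zz$-integral separates coordinate-wise, yielding
$$|I(\bm{\eta})|^2 \ll \prod_{u=1}^{n}\int_{|z|\le C}\prod_{j=1}^{m}\min\bigl(2,|\eta_{i_0 j}z|^{-1}\bigr)dz.$$
To bound each one-dimensional integral, crudely majorise $m-1$ of the factors by $2$ and retain only the factor of largest coefficient $\max_j|\eta_{i_0 j}| \ge |\eta_{i_0 j_0}| = T$; a routine calculation gives $\ll \min(1,T^{-1}\log T)$. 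Raising to the $n$-th power yields $|I(\bm{\eta})| \ll T^{-n/2+\eps}$, which is strictly stronger than the claimed bound $T^{-n/(2R)+\eps}$.

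The main delicate step is that, after swapping the integration order, one must retain rather than discard the oscillation in $\vv'_{i_0}$ produced by the $2\eta_{i_0 i_0}\vv'^T_{i_0}A\hh$ term, converting it into a factor $\min(2,|\eta_{i_0 i_0}z|^{-1})$ before taking absolute values. This is precisely what incorporates the $j = i_0$ term into the product over $j$ and handles configurations where $T$ is attained at a diagonal entry $\eta_{i_0 i_0}$; otherwise this case would be missed entirely.
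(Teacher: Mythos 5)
Your proof is correct and takes a genuinely different route from the paper's. The paper transfers the problem to the exponential sum $S'(\bm{\alpha})$: it approximates $S'(\bm{\alpha})$ by $P^{mn}I(P^2\bm{\alpha})$ (reusing the discretisation argument of Lemma~\ref{Salpha}), invokes the already-established Weyl inequality Lemma~\ref{Weyl2} to bound $S'$, and then reads off the bound for $I$ by choosing $P$ in terms of $\max|\eta_{ij}|$. You instead run a van~der~Corput/Weyl differencing argument directly on the integral: Cauchy--Schwarz in the $\vv_j$ ($j\ne i_0$) variables, open the square in $\vv_{i_0}$, substitute $\hh=\vv_{i_0}-\vv'_{i_0}$, observe that the resulting phase is linear in $\vv'_{i_0}$ and in each $\vv_j$, and integrate these out first to reduce to the separable $\zz=A\hh$ integral. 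Your single-block differencing actually yields the stronger exponent $\max|\eta_{ij}|^{-n/2+\eps}$, which certainly implies the claimed $\max|\eta_{ij}|^{-n/(2R)+\eps}$ since $R\ge 1$; the paper's weaker exponent reflects the loss inherent in routing through the simultaneous rational approximation in Lemma~\ref{Weyl}, but has the virtue of reusing machinery already built for the minor arcs. One minor point of care worth making explicit in your write-up: after the substitution $\hh=\vv_{i_0}-\vv'_{i_0}$ the domain of $\vv'_{i_0}$ is the box $\prod_u[\max(-1,-1-h_u),\min(1,1-h_u)]$ rather than $[-1,1]^n$, so the one-dimensional linear-phase bound you use must be stated as $\int_a^b e(\lambda v)\,dv\ll\min(b-a,|\lambda|^{-1})\ll\min(2,|\lambda|^{-1})$ over a sub-interval; this does not affect the conclusion. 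Likewise, $A([-2,2]^n)$ is not itself a box after the change $\zz=A\hh$, but since the integrand is nonnegative and the image lies inside $|\zz|\le C$ for a constant $C$ depending on $A$, enlarging the domain to a box only increases the integral, so the coordinate-wise factorisation is legitimate. With these routine points in place, your argument is complete and correct.
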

\begin{proof}
The first bound here is trivial.  We may therefore assume that $\max|\eta_{ij}| > 1$.

Let $P \geq 1$ be a parameter, and define 
$$S'(\bm{\alpha}) := \sum_{|\xx_{1}|,\ldots,|\xx_{m}| \leq P}
e\Big(\sum_{1 \leq i \leq j \leq m}\alpha_{ij}\xx_{i}^{T}A\xx_{j}\Big).$$
By following the proof of Lemma \ref{Salpha} with $q=1$, $\textbf{a} = \textbf{0}$, we get
\begin{align*}
  S'(\bm{\alpha}) = &
  P^{mn}\int_{[-1,1]^{mn}}e
  \Big(P^{2}\sum_{1 \leq i \leq j \leq m}\alpha_{ij}
  \vv_{i}^{T}A\vv_{j}\Big)\dv_{1}\cdots\dv_{m}\\
  +& O\Big(\sum_{1 \leq i \leq j \leq m}|
  \alpha_{ij}|P^{mn+1} + P^{mn-1}\Big).
\end{align*}
It is a simple corollary to Lemma \ref{Weyl2} (see the corollary to Birch \cite[Lemma 4.3]{Birch} for instance) 
that for $\max|\alpha_{ij}| < P^{-1}$, we have
\begin{equation}
\label{depris}
S'(\bm{\alpha}) \ll
P^{mn+\eps}(P^{2}\max|\alpha_{ij}|)^{-\frac{n}{2R}}:
\end{equation}
If $\max|\alpha_{ij}| \le P^{-2}$, then the bound is trivial,
otherwise write $\max|\alpha_{ij}| = P^{-2+m\Delta}$ for a
suitable $\Delta>0$. Since $\max|\alpha_{ij}|<P^{-1}$,
on noting that $\gamma=m$ in our situation, we find that
$\Delta<\frac{1}{\gamma}$. Thus, by Lemma \ref{Disjoint},
the major arcs are disjoint, so $\bm{\alpha}$ is at the boundary of
$\M(\Delta)$ and consequently, for any $\eps>0$, we have
$\bm{\alpha} \not \in \M(\Delta+\eps)$. Hence by Lemma
\ref{Weyl2} we have
\[
  S'(\bm{\alpha}) \ll P^{mn(1-\frac{\Delta+\eps}{2R}+\eps)}
  \ll P^{mn+\eps} (P^2 \max\{|\alpha_{ij}|\})^
  {-\frac{n(\Delta+\eps)}{2\Delta R}},
\]
confirming the bound \eqref{depris}.
Therefore, whenever $\max|\alpha_{ij}| < P^{-1}$, we have
\begin{align*}
 & P^{mn}\int_{[-1,1]^{mn}}e\Big(P^{2}\sum_{1 \leq i \leq j \leq m}
  \alpha_{ij}\vv_{i}^{T}A\vv_{j}\Big)\dv_{1}\cdots\dv_{m}\\
 & \ll\Big(P^{2}\sum_{1 \leq i \leq j \leq m}|
   \alpha_{ij}| + 1\Big)P^{mn-1}\\
 & + P^{mn+\eps}(P^{2}\max|\alpha_{ij}|)^{-\frac{n}{2R}}.
\end{align*}
Substituting $\eta_{ij} = P^{2}\alpha_{ij}$, noting that the
left-hand side of the previous inequality is just
$P^{mn} I(\bm{\eta})$, we obtain
$$I(\bm{\eta}) \ll
\Big(\max|\eta_{ij}|+1\Big)P^{-1} + P^{\eps}(\max|\eta_{ij}|)^{-\frac{n}{2R}}.$$
For given $\bm{\eta}$,
we may set $P=\max |\eta_{ij}|^{1 + \frac{n}{2R}} > 1$,
this way defining $\alpha_{ij}$ as
$\alpha_{ij}=\eta_{ij}P^{-2}$,
which implies that $\max |\alpha_{ij}|< P^{-1}$.
Hence the bound above gives
$$I(\bm{\eta}) \ll \max|\eta_{ij}|^{-\frac{n}{2R}+\eps}.$$
\end{proof}

\begin{lemma}\label{IRconv}
Assume that $n > (2\gamma + m(m-1))(R+1)$.  Then $\mathfrak{I}(\cc)$
converges absolutely, and furthermore, for any $Q \geq 1$, we have
$$|\mathfrak{I}(\cc) - \mathfrak{I}(Q,\cc)| \ll Q^{-1+\eps},$$
uniformly in $\cc$.
\end{lemma}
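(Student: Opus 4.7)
The plan is to bound everything using Lemma \ref{Ietabound} and a dyadic decomposition according to the size of $|\bm{\eta}|$. Since the exponential factor $e(-\sum \eta_{ij} c_{ij})$ has modulus $1$, it drops out of every absolute value, and in particular the dependence on $\cc$ disappears, which is why the estimates will be uniform in $\cc$.

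First I would show that the tail integral
\[
|\mathfrak{I}(\cc) - \mathfrak{I}(Q;\cc)| \le \int_{|\bm{\eta}| \gg Q} |I(\bm{\eta})| \, \deta
\]
is small. By Lemma \ref{Ietabound}, on the dyadic shell $T \le |\bm{\eta}| \le 2T$ (in max-norm) we have $|I(\bm{\eta})| \ll T^{-n/(2R) + \eps}$, and the shell has $R$-dimensional Lebesgue measure $\ll T^R$. Summing dyadically from $T \gg Q$, this yields
\[
\int_{|\bm{\eta}| \gg Q} |I(\bm{\eta})|\,\deta \ll \sum_{T \gg Q,\; T \text{ dyadic}} T^{R - n/(2R) + \eps}.
\]
The hypothesis $n > (2\gamma + m(m-1))(R+1)$ together with $\gamma \ge m$ gives $2\gamma + m(m-1) \ge 2R$, hence $n > 2R(R+1)$, so $\frac{n}{2R} > R+1$. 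Therefore the exponent $R - n/(2R) + \eps$ is strictly less than $-1$ for sufficiently small $\eps$, the geometric series converges, and the sum is $\ll Q^{R - n/(2R) + \eps} \ll Q^{-1 + \eps}$, as required.

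For absolute convergence of $\mathfrak{I}(\cc) = \mathfrak{I}(\infty;\cc)$, I would split $\mathbb{R}^R$ into $\{|\bm{\eta}| \le 1\}$, where the trivial bound $|I(\bm{\eta})| \le 1$ from Lemma \ref{Ietabound} combined with the finite measure of the region gives a bounded contribution, and $\{|\bm{\eta}| > 1\}$, which is handled by the above dyadic argument with $Q = 1$. The main (and really only) point to watch is the exponent bookkeeping: everything hinges on $\frac{n}{2R} > R + 1$, which is precisely what the hypothesis delivers. There is no serious obstacle here; the proof is a direct application of Lemma \ref{Ietabound} combined with the comparison $2\gamma + m(m-1) \ge 2R$.
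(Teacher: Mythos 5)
Your proof is correct and follows essentially the same approach as the paper: bound $|I(\bm{\eta})|$ via Lemma \ref{Ietabound}, note that the hypothesis on $n$ together with $\gamma \ge m$ forces $n/(2R) > R+1$, and conclude that the tail over $|\bm{\eta}| \gg Q$ contributes $\ll Q^{-1+\eps}$. The only cosmetic difference is that you sum over dyadic shells while the paper reduces the $R$-dimensional integral to a one-dimensional radial integral via Fubini; both give the same exponent $R - n/(2R) + \eps < -1$.
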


\begin{proof}
Let $N := \max|\eta_{ij}|$.  Then for any $1 \ll Q_{1} < Q_{2}$,
for suitable positive constants $c_6$ and $c_7$ we have
\begin{align*}
|\mathfrak{I}(Q_{2},\cc) - \mathfrak{I}(Q_{1},\cc)| = &
\int_{c_6 Q_{1} \leq N \leq c_7 Q_{2}}I(\bm{\eta})
e\Big(-\sum_{1 \leq i \leq j \leq m}\eta_{ij}c_{ij}\Big)\deta\\
\ll & \int_{c_6 Q_{1} \leq N \leq c_7 Q_{2}}
\min\{1,N^{-R-1-\frac{1}{2R}+\eps}\}\deta,
\end{align*}
on using Lemma \ref{Ietabound} and since $n \geq (2\gamma + m(m-1))(R+1) + 1 \geq 2R(R+1) + 1$.
Therefore,
applying Fubini's Theorem, and noting that $Q_1 \gg 1$, we obtain
$$|\mathfrak{I}(Q_{2},\cc) - \mathfrak{I}(Q_{1},\cc)| \ll
\int_{c_6 Q_{1}}^{c_7 Q_{2}}N^{-2-\frac{1}{2R}+\eps}d
N \ll Q_1^{-1-\frac{1}{2R}+\eps}.$$
Both parts of the lemma now follow.
\end{proof}

We now make use of our assumption that $B$ is Minkowski reduced:
As is well known, this implies that
\[
  \det{B} \ll \prod_{i=1}^m B_{ii} \ll \det{B},
\]
where the implied $O$-constant depends only on the dimension $m$ of $B$.
Combining Lemmas \ref{NABequals}, \ref{SRconv} and \ref{IRconv},
and noting that $\mathfrak{S}(\bb) \ll 1$ and
$\mathfrak{I}(\mathbf{c}) \ll 1$
(see \S \ref{depression}, \S \ref{oktober},
\eqref{ungef} and \eqref{alphainfty}) we therefore obtain the following.
\begin{lemma}
\label{geburtstag}
Assume that $n > (2\gamma + m(m-1))(R+1)$.  Then there exists $\delta > 0$ such that
$$N(A,B) = \Pi^{n-m-1}\mathfrak{S}(\bb)\mathfrak{I}(\cc) +
O\Big((\det{B})^{\frac{n-m-1}{2}-\delta}\Big),$$
where $c_{ij} = (P_{i}P_{j})^{-1}B_{ij}$ for $1 \leq i \leq j \leq m$.
\end{lemma}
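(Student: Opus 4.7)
The plan is to combine Lemma \ref{NABequals}, Lemma \ref{SRconv}, and Lemma \ref{IRconv} to pass from truncated singular series/integral to their completed versions, and then to translate the resulting $\Pi^{n-m-1}$ factor into $(\det B)^{(n-m-1)/2}$ using the Minkowski-reducedness of $B$. First I would fix $\Delta>0$ small enough to meet the hypothesis of Lemma \ref{NABequals} (namely $\Delta<\frac{1}{\gamma(2R+3)}$), which in particular also forces $\Delta<\frac{m+1}{R+1}$ and the disjointness range of Lemma \ref{Disjoint}. Lemma \ref{NABequals} then gives
\[
  N(A,B) = \Pi^{n-m-1}\mathfrak{S}(\Pi^{\Delta};\bb)\mathfrak{I}(\Pi^{\Delta};\cc) + O\bigl(\Pi^{n-m-1-\delta_0}\bigr)
\]
for some $\delta_0>0$.

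Next I would replace the truncated versions by the full singular series and singular integral. By Lemma \ref{SRconv} we have $|\mathfrak{S}(\bb)-\mathfrak{S}(\Pi^\Delta;\bb)|\ll \Pi^{-2\delta_1+\eps}$ for some $\delta_1>0$ and uniformly in $\bb$, and by Lemma \ref{IRconv} applied with $Q=\Pi^\Delta$ we have $|\mathfrak{I}(\cc)-\mathfrak{I}(\Pi^\Delta;\cc)|\ll \Pi^{-\Delta+\eps}$, uniformly in $\cc$. Using the trivial bounds $\mathfrak{S}(\bb)\ll 1$ and $\mathfrak{I}(\cc)\ll 1$ (referenced from the sections on the singular series and singular integral, together with \eqref{ungef}), the product $\mathfrak{S}(\Pi^\Delta;\bb)\mathfrak{I}(\Pi^\Delta;\cc)$ agrees with $\mathfrak{S}(\bb)\mathfrak{I}(\cc)$ up to an error of size $O(\Pi^{-\delta_2+\eps})$ for some $\delta_2>0$. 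Multiplying through by $\Pi^{n-m-1}$ and absorbing everything into a single error of shape $\Pi^{n-m-1-\delta}$ for a sufficiently small $\delta>0$ then gives
\[
  N(A,B) = \Pi^{n-m-1}\mathfrak{S}(\bb)\mathfrak{I}(\cc) + O\bigl(\Pi^{n-m-1-\delta}\bigr).
\]

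Finally I would translate the $\Pi$-size bound into one in terms of $\det B$. By definition \eqref{holiday} we have $P_i\asymp B_{ii}^{1/2}$, so $\Pi^2=\prod_{i=1}^m P_i^2 \asymp \prod_{i=1}^m B_{ii}$. Since $B$ is Minkowski-reduced, $\prod_{i=1}^m B_{ii}\asymp \det B$ (with implied constants depending only on $m$), hence $\Pi\asymp (\det B)^{1/2}$. Substituting this equivalence into the error term yields $O\bigl((\det B)^{(n-m-1)/2-\delta}\bigr)$, as required.

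The only genuinely delicate point is choosing $\Delta$ and the various $\delta_i$ consistently so that all three ingredients give a uniform power saving; none of the steps is individually hard because the earlier lemmas are tailored precisely to this combination. I expect no real obstacle, but care is needed to verify that the hypothesis $n>(2\gamma+m(m-1))(R+1)$ is strong enough for Lemmas \ref{NABequals}, \ref{SRconv} and \ref{IRconv} all to apply simultaneously with the same $\Delta$, and to ensure that the conversion $\Pi\asymp(\det B)^{1/2}$ does not disturb the power saving.
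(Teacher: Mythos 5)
Your proposal is correct and matches the paper's approach exactly: the paper proves this lemma by the same combination of Lemma \ref{NABequals}, Lemma \ref{SRconv}, Lemma \ref{IRconv}, the trivial bounds $\mathfrak{S}(\bb)\ll 1$ and $\mathfrak{I}(\cc)\ll 1$, and the Minkowski-reduction estimate $\det B \asymp \prod_i B_{ii}$ to convert $\Pi$ into $(\det B)^{1/2}$. Your careful check that a single $\Delta<\frac{1}{\gamma(2R+3)}$ simultaneously meets the hypotheses of all three lemmas (and of Lemma \ref{Disjoint}) is the right thing to verify, and your bookkeeping of the power savings $\delta_i$ into a single $\delta$ is exactly what the paper leaves implicit.
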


\section{Singular Series}
\label{depression}
The singular series $\mathfrak{S}(\bb)$ corresponds to $p$-adic solutions to the system of equations, 
and we shall show that it factors as a product over all primes of $\alpha_{p}(A,B)$.

\begin{lemma}
\label{weihnachten}
Suppose that $n > (2\gamma+m(m-1))(R+1)$.
Then we have
\[
  \mathfrak{S}(\bb) = \prod_{p}\alpha_{p}(A,B).
\]
\end{lemma}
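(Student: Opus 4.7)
The plan is to prove this via a standard decomposition of the singular series into an Euler product and then identify each Euler factor with the local density $\alpha_p(A,B)$.

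\textbf{Step 1: Multiplicativity.} First I would verify that the function
\[
  f(q) := q^{-mn} \sum_{\substack{\mathbf{a}\,(\bmod q)\\(\mathbf{a},q)=1}} S_{\mathbf{a},q}(\mathbf{b})
\]
is multiplicative. Given coprime $q_1, q_2$, write $q = q_1 q_2$ and use the Chinese Remainder Theorem in two ways: parameterize $a_{ij} \pmod q$ as $a_{ij} = a_{ij}^{(1)} q_2 \overline{q_2} + a_{ij}^{(2)} q_1 \overline{q_1}$ with $a_{ij}^{(\ell)} \pmod{q_\ell}$ and $(\mathbf{a}^{(\ell)}, q_\ell)=1$, and parameterize $\mathbf{z}_i \pmod q$ analogously. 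The resulting exponential $e_q(\cdot)$ splits as $e_{q_1}(\cdot) \cdot e_{q_2}(\cdot)$ after absorbing the CRT coefficients into a change of variables for $\mathbf{a}^{(\ell)}$, and the sums over $\mathbf{z}_i \pmod q$ factor accordingly; the factor $q^{-mn} = q_1^{-mn} q_2^{-mn}$ matches, giving $f(q_1q_2) = f(q_1)f(q_2)$. Combined with absolute convergence from Lemma \ref{SRconv}, this yields
\[
  \mathfrak{S}(\mathbf{b}) = \prod_p \left( \sum_{u=0}^\infty p^{-umn} \sum_{\substack{\mathbf{a}\,(\bmod p^u) \\ (\mathbf{a},p)=1}} S_{\mathbf{a},p^u}(\mathbf{b}) \right),
\]
with the convention that the $u=0$ term equals $1$.

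\textbf{Step 2: Orthogonality expansion of $\alpha_p(A,B)$.} Using the character sum identity $\mathbf{1}[n \equiv 0 \pmod{p^t}] = p^{-t}\sum_{a \,(\bmod p^t)} e_{p^t}(an)$ applied independently to each of the $R$ congruences $\mathbf{x}_i^T A \mathbf{x}_j \equiv B_{ij} \pmod{p^t}$, I get, for any $t$ large enough to compute $\alpha_p(A,B)$,
\[
  \alpha_p(A,B) = p^{-t \cdot mn} \sum_{\mathbf{a}\,(\bmod p^t)} S_{\mathbf{a},p^t}(\mathbf{b}).
\]

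\textbf{Step 3: Stratify by $p$-adic gcd.} I would split the unrestricted $\mathbf{a}$-sum by $s := v_p((\mathbf{a}, p^t))$, writing $\mathbf{a} = p^s \mathbf{a}'$ with $(\mathbf{a}',p)=1$ and $\mathbf{a}' \pmod{p^{t-s}}$. Then
\[
  e_{p^t}\!\left(\sum a_{ij}(\mathbf{z}_i^T A \mathbf{z}_j - B_{ij})\right) = e_{p^{t-s}}\!\left(\sum a'_{ij}(\mathbf{z}_i^T A \mathbf{z}_j - B_{ij})\right),
\]
so the $\mathbf{z}_i$-summand depends only on $\mathbf{z}_i \pmod{p^{t-s}}$, giving $S_{\mathbf{a},p^t}(\mathbf{b}) = p^{smn} S_{\mathbf{a}',p^{t-s}}(\mathbf{b})$. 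Substituting $u = t-s$,
\[
  \alpha_p(A,B) = \sum_{u=0}^{t} p^{-umn} \sum_{\substack{\mathbf{a}'\,(\bmod p^u) \\ (\mathbf{a}',p)=1}} S_{\mathbf{a}',p^u}(\mathbf{b}),
\]
which, in view of the absolute convergence established in Lemma \ref{SRconv}, is precisely the $p$-factor in the Euler product from Step 1 (the remaining tail $u>t$ contributes the same to $\mathfrak{S}$'s local factor, but by the stabilization property of $\alpha_p$ for large $t$, it must vanish; concretely, the fact that the left-hand side is independent of $t$ for $t$ large forces this).

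\textbf{Expected obstacle.} There is no deep obstacle; the proof is bookkeeping. The one point requiring care is justifying that the partial sum over $u \le t$ equals the full local factor for $t$ sufficiently large. This follows from the tail bound implicit in Lemma \ref{SRconv} (applied to the single prime $p$): the estimate $|S_{\mathbf{a},p^u}(\mathbf{b})| \ll p^{u(mn - n/(2R)+\eps)}$ makes the sum over $u > t$ tend to $0$ as $t \to \infty$, while the left-hand side $\alpha_p(A,B)$ is independent of $t$ for $t$ large. Comparing the two conclusions gives the claimed identity.
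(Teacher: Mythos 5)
Your proof is correct and follows essentially the same approach as the paper: both pass to an Euler product via absolute convergence and then identify the local factor at $p$ with $\alpha_p(A,B)$ by orthogonality of additive characters, stratifying $\mathbf{a} \pmod{p^t}$ according to the $p$-adic valuation of $\gcd(\mathbf{a},p^t)$. The only difference is that you spell out the CRT multiplicativity and the tail-convergence argument that the paper delegates to references (Birch, Section 7, and Kitaoka's Lemma 5.6.1, respectively).
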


\begin{proof}
Since $\mathfrak{S}(\mathbf{b})$ is absolutely convergent by
Lemma \ref{SRconv},
a standard argument (see Birch \cite[Section 7]{Birch} for example)
then gives
\begin{align*}
\mathfrak{S}(\bb) = & \prod_{p}\sum_{r=0}^{\infty}
\sum_{\substack{\textbf{a}\,(\bmod{p^{r}})\\(\textbf{a},p)=1}}p^{-rmn}S_{\textbf{a},p^{r}}(\bb)\\
= & \prod_{p}\mathfrak{S}_{p}(\bb),
\end{align*}
say.  Now, for each prime $p$, we have
\begin{align*}\mathfrak{S}_{p}(\bb)
= &\lim_{N \rightarrow \infty}\sum_{r=0}^{N}
\sum_{\substack{\textbf{a}\,(\bmod{p^{r}})\\
(\textbf{a},p)=1}}p^{-rmn}S_{\textbf{a},p^{r}}(\bb)\\
= &\lim_{N \rightarrow \infty}(p^{-N})^{mn-R}\\
\times &
\#\{\xx_{1},\ldots, \xx_{m} \pmod{p^{N}}: 
\xx_{i}^{T}A\xx_{j} \equiv B_{ij} \pmod{p^{N}}
\; (1 \le i \le j \le m)\}\\
= &\lim_{N \rightarrow \infty}(p^{-N})^{mn-R}\#\{X \pmod{p^{N}}:
X^{T}AX \equiv B \pmod{p^{N}}\}.
\end{align*}
By \cite[Lemma 5.6.1]{Kitaokabook}, there exists an integer $t\geq 0$
such that
$$(p^{-N})^{mn-R}\#\{X \pmod{p^{N}}: X^{T}AX \equiv B \pmod{p^{N}}\}$$
remains constant for all $N \geq t$.  This is $\alpha_{p}(A,B)$ defined in \eqref{alphap}, and so
we have $\mathfrak{S}_{p}(\bb) = \alpha_{p}(A,B)$.
\end{proof}

\section{Singular Integral}
\label{oktober}
The proof of Theorem \ref{thm} will be complete on showing that
$\Pi^{n-m-1}\mathfrak{I}(\cc) = \alpha_{\infty}(A,B)$, defined in \eqref{alphainfty}.

Let $U \subset \mathbb{R}^{m(m+1)/2}$ be a real neighbourhood of $B$.  Let $V \subset \mathbb{R}^{mn}$ be the set of real
$n \times m$ matrices $X$ such that $X^{T}AX$ lies inside $U$.  Then it is known 
(see \cite[Chapter A.3]{Cassels} for instance) that $\alpha_{\infty}(A,B)$ is equal to the limit of
$$\frac{\text{vol}(V)}{\text{vol}(U)}$$
as the neighbourhood $U$ shrinks to $B$.
Therefore, taking the neighbourhood
$$\prod_{1 \leq i \leq j \leq m} [B_{ij} - \eps P_{i}P_{j}, B_{ij} + \eps P_{i}P_{j}],$$
for $\eps > 0$, we may deduce that
\begin{equation}
\label{woche10}
\alpha_{\infty}(A,B) = \lim_{\eps \rightarrow 0}\frac{1}{\Pi^{m+1}(2\eps)^{R}}
\int_{\substack{|\xx_{i}^{T}A\xx_{j} - B_{ij}| \leq (P_{i}P_{j})\eps\\1 \leq i \leq j \leq m}}d\xx_{1}\cdots d\xx_{m}.
\end{equation}

For  $\cc=(c_{ij})_{1 \leq i \leq j \leq m}$ with $c_{ij} = (P_{i}P_{j})^{-1}B_{ij}$, 
let $V(\cc)$ denote the real variety defined by
$$\xx_{i}^{T}A\xx_{j} - c_{ij} = 0 \quad (1 \leq i \leq j \leq m).$$
\begin{lemma}\label{nonsing}
The variety  $V(\cc)$ is non-empty and non-singular.
\end{lemma}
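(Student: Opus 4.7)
The plan is to reduce both assertions to the positive definiteness of the symmetric matrix $C = (c_{ij})_{1 \le i,j \le m}$ (extending by symmetry so that $c_{ji} := c_{ij}$ for $i<j$), which follows immediately from the identity $C = D^{-1} B D^{-1}$ with $D = \mathrm{diag}(P_1,\ldots,P_m)$. Thus $V(\cc)$ consists precisely of those real $n \times m$ matrices $X$ with $X^T A X = C$.

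For non-emptiness, I would construct a solution explicitly using symmetric square roots. Let $C^{1/2}$ be the positive definite square root of $C$ and let $A^{-1/2}$ be that of $A^{-1}$. Define the $n \times m$ matrix $Y$ by placing $C^{1/2}$ in the top $m$ rows and zero rows below; this is legitimate since the hypothesis on $n$ in Theorem \ref{thm} certainly forces $n \ge m$. Then $Y^T Y = C$, and hence $X := A^{-1/2} Y$ satisfies $X^T A X = Y^T Y = C$ and lies in $V(\cc)$.

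For non-singularity, I would verify that the Jacobian of the defining equations has full rank $R$ at every $X \in V(\cc)$. Regarding the defining map $\Phi(X) = X^T A X$ as taking values in the space $\mathrm{Sym}_m(\R)$ of symmetric $m \times m$ matrices (naturally identified with $\R^R$ via the indices $(i,j)$ with $i \le j$), its derivative at $X$ is $d\Phi_X(Y) = X^T A Y + Y^T A X$. The crucial observation is that any $X \in V(\cc)$ must have full column rank $m$: if $X v = 0$ for some nonzero $v \in \R^m$, then $v^T C v = v^T X^T A X v = 0$, contradicting positive definiteness of $C$. Once this is known, $X^T X$ is invertible, and for any symmetric $S \in \mathrm{Sym}_m(\R)$ the choice $Y = \tfrac{1}{2} A^{-1} X (X^T X)^{-1} S$ yields $X^T A Y = \tfrac{1}{2} S$ and $Y^T A X = \tfrac{1}{2} S^T = \tfrac{1}{2} S$, so that $d\Phi_X(Y) = S$. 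This gives surjectivity of $d\Phi_X$, which is exactly the non-singularity of $V(\cc)$ at $X$.

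There is no serious obstacle here; the entire argument is elementary linear algebra. The only mild subtlety is formulating non-singularity as the surjectivity of $d\Phi_X$ onto $\mathrm{Sym}_m(\R)$ rather than manipulating the $R \times mn$ Jacobian matrix entry-by-entry, which keeps the computation transparent and makes it evident how positive definiteness of $C$ enters through the rank of $X$.
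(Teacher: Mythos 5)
Your proof is correct and arrives at the same key insight as the paper, namely that any real $X$ with $X^TAX = C$ must have full column rank because $C$ is positive definite, but it packages the argument differently. For non-emptiness, the paper reuses the real solution of $Y^TAY = B$ already observed in \S 2 and scales by $P_i^{-1}$, whereas you build a witness from scratch via $A^{-1/2}$ and $C^{1/2}$; both are fine. For non-singularity, the paper works contrapositively with the $R \times mn$ Jacobian matrix entry by entry: a linear relation among its rows forces a nontrivial linear relation among the columns of $X$, so $X^TAX = C$ would be singular, a contradiction. You instead identify the target with $\mathrm{Sym}_m(\R)$, note that $d\Phi_X(Y) = X^TAY + Y^TAX$, and exhibit an explicit right inverse $Y = \tfrac{1}{2} A^{-1} X (X^TX)^{-1} S$ once full column rank of $X$ is known. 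Your version is cleaner and avoids the somewhat delicate index bookkeeping in the paper (where one must argue carefully that the linear dependence among Jacobian rows, restricted to the columns of a suitable block $i$, gives a nontrivial combination of the $\mathbf{x}_j$); on the other hand the paper's phrasing stays closer to the Jacobian-matrix language it uses again in the final lemma of \S 6, so the two are better matched there. Either way, the hidden hypothesis $n \ge m$ needed for non-emptiness is automatically satisfied by the condition on $n$ in Theorem \ref{thm}, as you note.
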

\begin{proof}
By our choice of the $P_i$ in \eqref{holiday},
there exist real vectors $\yy_{1},\ldots,\yy_{m}$ such that
$$\yy_{i}^{T}A\yy_{j} = B_{ij} \quad (1\leq i \leq j \leq m).$$
Therefore taking $\xx_{i} = P_{i}^{-1}\yy_{i}$ for each $i \in \{1,\ldots, m\}$ gives a real point on $V(\cc)$.

Now consider the Jacobian matrix of this variety.  This is an $R \times mn$ matrix, and suppose that there exist real
vectors $\xx_{1},\ldots,\xx_{m}$ where this Jacobian has rank strictly less than $R$.  Therefore the rows of the Jacobian are
linearly dependent,
and considering the $n$ columns corresponding to
some suitable vector $\xx_{i}$, we deduce that
there exist real numbers $\lambda_{1},\ldots,\lambda_{m}$, not all zero, such that
\[
  A (2\lambda_{i}\xx_{i} + \sum_{\substack{j=1\\j \ne i}}^m
  \lambda_j \xx_j ) = \mathbf{0}.
\]
Since $A$ is non-singular, we must have
\[
  2\lambda_{i}\xx_{i} + \sum_{\substack{j=1\\j \ne i}}^m
  \lambda_j \xx_j = \mathbf{0}.
\]
Therefore the matrix $X$ whose columns are the vectors $\xx_{1},\ldots,\xx_{m}$ does not have full rank. It follows that
$X^{T}AX$ does not have full rank for any vectors $\xx_{1},\ldots,\xx_{m}$ where the Jacobian does not have full rank. 
Now the matrix $B$ has full rank, and the matrix
$C = (c_{ij})_{1 \leq i,j \leq m}$ can be written as $C=DBD$, where $D$
denotes the diagonal matrix having entries $P_1^{-1}, \ldots, P_m^{-1}$
on the diagonal. Therefore, also $C$ has full rank, whence we have shown
that
there cannot be a solution to $X^{T}AX = C$ where
$X$ does not have full rank. This shows that the variety $V(\cc)$ is
non-singular.
\end{proof}

Combining Lemma \ref{geburtstag}, Lemma \ref{weihnachten},
\eqref{woche10} and the following lemma we conclude
the proof of Theorem \ref{thm}.

\begin{lemma}
We have
$$\Pi^{n-m-1}\mathfrak{I}(\cc) = 
\lim_{\eps \rightarrow 0}\frac{1}{\Pi^{m+1}(2\eps)^{R}}
\int_{\substack{|\xx_{i}^{T}A\xx_{j} - B_{ij}| \leq (P_{i}P_{j})\eps\\1 \leq i \leq j \leq m}}d\xx_{1}\cdots d\xx_{m}.$$
\end{lemma}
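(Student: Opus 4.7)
The plan is to transform the right-hand side into the integral defining $\mathfrak{I}(\cc)$ using a substitution and Fourier inversion. First, I would substitute $\vv_i = P_i^{-1}\xx_i$ in the right-hand side, producing a Jacobian of $\Pi^n$ and turning the constraints $|\xx_i^T A \xx_j - B_{ij}| \leq (P_i P_j)\eps$ into $|\vv_i^T A \vv_j - c_{ij}| \leq \eps$. By positive definiteness of $A$ together with the fact that $c_{ii} = C^{-2/\gamma_i}$ can be made small via a suitable choice of $C$ in \eqref{holiday}, the diagonal constraints $|\vv_i^T A \vv_i - c_{ii}| \leq \eps$ confine $\vv_i$ to $[-1,1]^n$ for all sufficiently small $\eps$. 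This reduces the right-hand side to
\[
  \Pi^{n-m-1} \lim_{\eps \to 0} \frac{1}{(2\eps)^R} \int_{[-1,1]^{mn}} \prod_{1 \leq i \leq j \leq m}
  \chi_{[-\eps,\eps]}\bigl(\vv_i^T A \vv_j - c_{ij}\bigr) \, d\vv_1 \cdots d\vv_m.
\]

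Next, I would use the Fourier inversion identity
\[
  \frac{1}{2\eps}\chi_{[-\eps,\eps]}(t) = \int_{\R} K_\eps(\eta)\, e(t\eta)\, d\eta,
  \qquad K_\eps(\eta) := \frac{\sin(2\pi\eps\eta)}{2\pi\eps\eta},
\]
to replace each indicator factor by an integral over $\eta_{ij} \in \R$. After interchanging the order of integration, the inner integral over $\vv$ is exactly $e\bigl(-\sum \eta_{ij} c_{ij}\bigr)\, I(\bm{\eta})$, so the expression becomes
\[
  \Pi^{n-m-1} \lim_{\eps \to 0} \int_{\R^R} \Bigl(\prod_{i \leq j} K_\eps(\eta_{ij})\Bigr)
  I(\bm{\eta})\, e\Bigl(-\sum_{i \leq j}\eta_{ij} c_{ij}\Bigr)\, d\bm{\eta}.
\]
Since $K_\eps(\eta) \to 1$ pointwise and $|K_\eps| \leq 1$, dominated convergence then identifies the limit with $\Pi^{n-m-1}\, \mathfrak{I}(\cc)$.

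The main obstacle is justifying both the interchange of integration and the passage to the limit, since $K_\eps$ only decays like $|\eta|^{-1}$. This is where Lemma \ref{Ietabound} is crucial: combined with the hypothesis $n > (2\gamma + m(m-1))(R+1) \geq 2R(R+1)+1$, it furnishes $I(\bm{\eta}) \ll \min\{1, \max|\eta_{ij}|^{-R-1-1/(2R)+\eps}\}$, which is integrable on $\R^R$. Together with $|K_\eps(\eta)|\leq 1$, this provides the dominating function needed to apply Fubini and dominated convergence legitimately, completing the identification of the right-hand side with $\Pi^{n-m-1}\mathfrak{I}(\cc)$.
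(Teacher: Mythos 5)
Your strategy is genuinely different from the paper's: there the identity is proved geometrically, by partitioning the thickened variety $\{|\vv_i^T A \vv_j - c_{ij}| \le \eps\}$ into pieces on which a fixed $R \times R$ Jacobian minor $\delta$ is bounded away from zero, applying Taylor's theorem to see that each transverse slab has volume $(2\eps)^R|\delta|^{-1} + O(\eps^{R+1})$, and invoking Birch's computation of the singular integral. You instead aim to go directly through Fourier inversion. The idea is viable, but the justification you give for the interchange of integration has a gap. Fubini requires integrability of the \emph{joint} integrand $\prod_{i\le j} K_\eps(\eta_{ij})\, e\bigl(\sum_{i\le j} \eta_{ij}(\vv_i^T A \vv_j - c_{ij})\bigr)$ over $[-1,1]^{mn} \times \R^R$, and its modulus is $\prod_{i\le j}|K_\eps(\eta_{ij})|$, which is not integrable in $\bm{\eta}$: the sinc kernel is not $L^1$. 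The bound from Lemma~\ref{Ietabound} controls $|I(\bm{\eta})|$, i.e.\ the modulus of the already-performed inner $\vv$-integral, which exploits oscillation; it does not bound the integral of the modulus and hence does not license Fubini. Relatedly, $\frac{1}{2\eps}\chi_{[-\eps,\eps]}(t) = \int_\R K_\eps(\eta)\,e(t\eta)\,d\eta$ holds only as a conditionally convergent (principal-value) integral, not as a Lebesgue integral.

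The gap is fixable by inserting a truncation. Work with $\int_{|\eta_{ij}|\le T}$ in place of $\int_\R$; Fubini over the bounded box is then immediate, the inner $\eta$-integral becomes a Dirichlet-type kernel, and letting $T\to\infty$ recovers $\frac{1}{2\eps}\chi_{[-\eps,\eps]}$ almost everywhere by pointwise Fourier inversion for functions of bounded variation, with a uniform bound on the partial integrals to feed dominated convergence on the $\vv$-side, while on the $\bm{\eta}$-side the integrability of $I(\bm{\eta})$ from Lemma~\ref{Ietabound} handles the $T\to\infty$ limit. Equivalently, one could replace $\chi_{[-\eps,\eps]}$ by Fej\'{e}r-type majorants and minorants whose Fourier transforms lie in $L^1$, so that Fubini applies outright, and then remove the smoothing. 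With one of these repairs your argument closes and would constitute a legitimate alternative to the paper's geometric treatment.
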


\begin{proof}

For simplicity in notation, we shall denote the variety $V(\cc)$ by
$$G_{i,j}(\xx) = 0 \quad (1 \leq i \leq j \leq m),$$
for $\xx = (\xx_{1},\ldots,\xx_{m}) \in \mathbb{R}^{mn}$.
By Lemma \ref{nonsing}, this variety is non-empty and non-singular.
Therefore the variety has positive $(mn-R)$-dimensional measure, and 
the Jacobian matrix
$$\Big(\frac{\partial G_{i,j}(\xx)}{\partial x_{st}}\Big)_
{\substack{1 \leq i \leq j \leq m\\1 \leq s \leq m, 1 \leq t \leq n}}$$
has rank $R$ at all real points.
Since $A$ is positive definite,
for any $\eps >0$, the set $V(\mathbf{c}, \eps)$ of real $\xx$ satisfying
$$|G_{i,j}(\xx)| \leq \eps \quad (1 \leq i \leq j \leq m)$$
is closed and bounded, and hence compact. 
Moreover, by continuity, for small enough $\eps$
the Jacobian is still non-singular at
any point of this set, because this is true for $V(\mathbf{c})$.
Therefore we can partition $V(\cc, \varepsilon)$ into a finite number of
measurable partitions, and
on each partition, say $\xi$, there exists some $R$-tuple
$x_{s_{1}t_{1}}, \ldots,
x_{s_{R}t_{R}}$ with $1 \leq s_{1},\ldots,s_{R} \leq m, 1 \leq t_{1},\ldots,t_{R}\leq n$, such that for
\[
  \delta := \det\Big(\frac{\partial G_{i,j}(\xx)}
  {\partial x_{s_{k}t_{k}}}\Big)_{\substack{1 \leq i \leq j \leq m\\
  1 \leq k \leq R}}
\]
we have
\[
  |\delta| \gg 1
\]
for all points in $\xi$.
In particular, for all $1 \le k \le R$ 
for at least one pair $i,j$ we have
\begin{equation}
\label{phd_comics}
 \left| \frac{\partial G_{i,j}(\yy,\zz)}{\partial x_{s_{k}t_{k}}} \right|
 \gg 1
\end{equation}
throughout $\xi$, with an implied constant independent
of $\eps$.
Since the number of possibilities to choose the $s_k$ and the $t_k$ is
finite and independent of $\eps$, we can assume that the number of
partitions is independent of $\eps$ as well.

We shall write a typical vector $\xx = (\xx_{1},\ldots,\xx_{m})\in \xi$ as
$(\yy,\zz)$, where 
$$\yy = (x_{s_{1}t_{1}}, \ldots, x_{s_{R}t_{R}}),$$ and $\zz$ denotes the remaining variables.  
Suppose that $(\yy^{(1)},\zz)$ is a
point in $\xi$ which lies on the variety $V(\cc)$.  Then we have
$$|G_{i,j}(\yy,\zz) - G_{i,j}(\yy^{(1)},\zz)| \leq \eps \quad (1 \leq i \leq j \leq m).$$
By \eqref{phd_comics} and the mean-value theorem, it follows that 
$|x_{s_{k}t_{k}} - x_{s_{k}t_{k}}^{(1)}| \ll \eps$
for each $1 \leq k \leq R$.

Now we may write, by Taylor's Theorem,
\begin{align*}
& G_{i,j}(\yy,\zz) - G_{i,j}(\yy^{(1)},\zz)\\ & = \sum_{k = 1}^{R}
(x_{s_{k}t_{k}}-x_{s_{k}t_{k}}^{(1)}) \frac{\partial G_{i,j}(\yy^{(1)},\zz)}{\partial x_{s_{k}t_{k}}} + 
O(\eps^{2}) \quad (1 \leq i \leq j \leq m),
\end{align*}
on noting that
the second partial derivatives of the $G_{i,j}$ are all constant.
Therefore, inverting these $R$ linear equations,
we see that the conditions $|G_{i,j}(\xx)| \leq \eps
\; (1 \leq i \leq j \leq m)$ imply that 
$\yy$ lies in a region of volume $(2\eps)^{R}\delta^{-1} + O(\eps^{R+1})$.

Hence we have
$$\frac{1}{(2\eps)^{R}}\int_{\xi}d\xx_{1}\cdots d\xx_{m} = 
\int_{V(\cc)\cap \xi}\frac{d\zz}{\delta} + O(\eps).$$
On summing over all partitions $\xi$ and taking the limit as $\eps \rightarrow 0$, the right side above
is $\mathfrak{I}(\cc)$, following the argument of \cite[Section 6]{Birch}.
The left side becomes
\begin{align*}
& \lim_{\eps \rightarrow 0}\frac{1}{(2\eps)^{R}}\int_{\substack{|G_{i,j}(\xx)| \leq \eps\\1 \leq i \leq j \leq m}}
d\xx_{1}\cdots d\xx_{m}\\ & = 
\Pi^{-n}\lim_{\eps \rightarrow 0}\frac{1}{(2\eps)^{R}}
\int_{\substack{|\xx_{i}^{T}A\xx_{j} - B_{ij}| \leq (P_{i}P_{j})\eps\\1 \leq i \leq j \leq m}}d\xx_{1}\cdots d\xx_{m},
\end{align*}
on making a change of variables.  This completes the proof of the lemma.
\end{proof}

\providecommand{\bysame}{\leavevmode\hbox to3em{\hrulefill}\thinspace}
\providecommand{\MR}{\relax\ifhmode\unskip\space\fi MR }
\providecommand{\MRhref}[2]{%
  \href{http://www.ams.org/mathscinet-getitem?mr=#1}{#2}
}
\providecommand{\href}[2]{#2}

\end{document}